\numberwithin{equation}{section}
\theoremstyle{plain}
\newtheorem{theorem}{Theorem}[section]
\newtheorem{lemma}{Lemma}[section]
\theoremstyle{remark}
\newtheorem{remark}{Remark}[section]
\DeclareMathOperator{\td}{d\mspace{-2mu}}
\begin{document}

\title[Inequalities to logarithmically complete monotonicity]
{From inequalities involving exponential functions and sums to logarithmically complete monotonicity of ratios of gamma functions}

\author[F. Qi]{Feng Qi}
\address{College of Mathematics, Inner Mongolia University for Nationalities, Tongliao 028043, Inner Mongolia, China; School of Mathematical Sciences, Tianjin Polytechnic University, Tianjin 300387, China}
\email{\href{mailto: F. Qi <qifeng618@gmail.com>}{qifeng618@gmail.com}, \href{mailto: F. Qi <qifeng618@hotmail.com>}{qifeng618@hotmail.com}, \href{mailto: F. Qi <qifeng618@qq.com>}{qifeng618@qq.com}}
\urladdr{\url{https://qifeng618.wordpress.com}}

\author[B.-N. Guo]{Bai-Ni Guo}
\address[Guo]{School of Mathematics and Informatics, Henan Polytechnic University, Jiaozuo 454010, Henan, China}
\email{\href{mailto: B.-N. Guo <bai.ni.guo@gmail.com>}{bai.ni.guo@gmail.com}, \href{mailto: B.-N. Guo <bai.ni.guo@hotmail.com>}{bai.ni.guo@hotmail.com}}
\urladdr{\url{http://www.researcherid.com/rid/C-8032-2013}}

\begin{abstract}
In the paper, the authors review origins, motivations, and generalizations of a series of inequalities involving several exponential functions and sums, establish three new inequalities involving finite exponential functions and sums by finding convexity of a function related to the generating function of the Bernoulli numbers, survey the history, backgrounds, generalizations, logarithmically complete monotonicity, and applications of a series of ratios of finite gamma functions, present complete monotonicity of a linear combination of finite trigamma functions, construct a new ratio of finite gamma functions, derives monotonicity, logarithmic convexity, concavity, complete monotonicity, and the Bernstein function property of the newly constructed ratio of finite gamma functions, and suggest two linear combinations of finite trigamma functions and two ratios of finite gamma functions to be investigated.
\end{abstract}

\subjclass[2010]{Primary 33B15; Secondary 26A48, 26A51, 26D15, 44A10, 65R10}

\keywords{Bernstein function; inequality; exponential function; convexity; sum; concavity; logarithmic convexity; complete monotonicity; logarithmically complete monotonicity; linear combination; ratio; gamma function; trigamma function; generating function; Bernoulli number; generalization; motivation; application}

\thanks{This paper was typeset using \AmS-\LaTeX}

\maketitle
\tableofcontents

\section{Motivations}

In~\cite[Lemma~1]{Alzer-JMAA-2018}, by convexity of $\frac{1}{y^{1/\lambda}-1}$ and $\frac{1}{y^{1/\lambda}-1}+\frac{1}{y^{1/(1-\lambda)}-1}$ with respect to $\lambda\in(0,1)$, the inequality
\begin{equation}\label{Alzer-jmaa-2018Lem1}
\frac{1}{y-1}-\frac{1}{y^{1/\lambda}-1}-\frac{1}{y^{1/(1-\lambda)}-1}>0, \quad y>1
\end{equation}
was proved to be true.
\par
In~\cite[Lemma~1.4]{Ouimet-JMAA-2018}, inequality~\eqref{Alzer-jmaa-2018Lem1} was generalized as one which can be reformulated as
\begin{equation}\label{Ouimet-Ineq-exp-2018}
\frac1{y-1}-\sum_{k=1}^n\frac{1}{y^{1/\lambda_k}-1}>0,
\end{equation}
where $y>1$ and $\lambda_1,\lambda_2,\dotsc,\lambda_n\in(0,1)$ such that $\sum_{k=1}^{n}\lambda_k=1$.
\par
In proofs of~\cite[Theorems~2.1 and~2.2]{Alzer-CM-JMAA.tex},  by convexity of the function $\frac{1}{y^{1/\lambda}-1}$ for $y>1$ with respect to $\lambda\in(0,\infty)$, by convexity of the function $t H\bigl(\frac1t\bigr)$ on $(0,\infty)$, and by decreasing monotonicity of $H(t)$ on $(-\infty,\infty)$, where
$$
H(t)=\frac{t}{e^t-1}, \quad t\in\mathbb{R}
$$
is the generating function of the Bernoulli numbers (see~\cite[Section~1]{CAM-D-18-00067.tex} and~\cite[Chapter~1]{Temme-96-book}), the inequality
\begin{equation}\label{Qi-Exp-Ineq-sum-O}
\Biggl(\sum_{k=1}^n\lambda_k\Biggr) H\biggl(\frac{x}{\sum_{k=1}^n\lambda_k}\biggr) \ge\sum_{k=1}^{n}\lambda_k H\biggl(\frac{x}{\lambda_k}\biggr)
\end{equation}
for $\lambda_k>0$ and $x>0$ was proved to be true.
\par
In the proof of~\cite[Theorem~3.1]{Q-Alzer-CM-Q.tex}, by convexity of the function $t^3H\bigl(\frac1t\bigr)$ on $(0,\infty)$, the inequality
\begin{equation}\label{Qi-Exp-Ineq-sum-3}
\Biggl(\sum_{k=1}^n\lambda_k\Biggr)^3 H\biggl(\frac{x}{\sum_{k=1}^n\lambda_k}\biggr)
\ge\sum_{k=1}^{n}\lambda_k^3 H\biggl(\frac{x}{\lambda_k}\biggr)
\end{equation}
was proved to be true for $\lambda_k>0$ and $x>0$.
\par
In~\cite[Lemma~A.1]{OUIMET-ARXIV-1907-05262}, the inequality
\begin{equation}\label{Ouimet-lem-ineq}
\sum_{i=1}^m\frac1{y^{1/\nu_i}-1}+\sum_{j=1}^n\frac1{y^{1/\tau_j}-1}
>\sum_{i=1}^m\sum_{j=1}^n\frac1{y^{1/\lambda_{ij}}-1}
\end{equation}
was complicatedly proved to be valid for $y>1$ and $0<\lambda_{ij}\le1$, where $\nu_i=\sum_{j=1}^n\lambda_{ij}$ and $\tau_j=\sum_{i=1}^m\lambda_{ij}$ satisfying $\sum_{i=1}^m\nu_i=\sum_{j=1}^n\tau_j=1$.
\par
In~\cite[Theorem~3.1]{Ouimet-LCM-BKMS.tex}, by considering convexity of the function $tH\bigl(\frac1t\bigr)$ on $(0,\infty)$, the inequality
\begin{equation}\label{Ouimet-lem-iq}
\sum_{i=1}^m\frac1{e^{x/\nu_i}-1}+\sum_{j=1}^n\frac1{e^{x/\tau_j}-1}
\ge2\sum_{i=1}^m\sum_{j=1}^n\frac1{e^{x/\lambda_{ij}}-1}
\end{equation}
was proved to be true for $x>0$ and $\lambda_{ij}>0$, where $\nu_i=\sum_{j=1}^n\lambda_{ij}$ and $\tau_j=\sum_{i=1}^m\lambda_{ij}$.
\par
We observe that
\begin{enumerate}
\item
inequality~\eqref{Alzer-jmaa-2018Lem1} can be rearranged as
\begin{equation}\label{Alzer-jmaa-2018Lem1-rearr}
\frac{1}{e^{x/[\lambda+(1-\lambda)]}-1}>\frac{1}{e^{x/\lambda}-1}+\frac{1}{e^{x/(1-\lambda)}-1},
\end{equation}
where $x=\ln y>0$ and $\lambda\in(0,1)$;
\item
inequality~\eqref{Ouimet-Ineq-exp-2018} can be rewritten as
\begin{equation}\label{Ouimet-Ineq-exp-2018-rew}
\frac1{e^{x/\sum_{k=1}^{n}\lambda_k}-1}>\sum_{k=1}^n\frac{1}{e^{x/\lambda_k}-1},
\end{equation}
where $x=\ln y>0$ and $\lambda_1,\lambda_2,\dotsc,\lambda_n\in(0,1)$ such that $\sum_{k=1}^{n}\lambda_k=1$;
\item
inequality~\eqref{Qi-Exp-Ineq-sum-O} can be reformulated as~\eqref{Ouimet-Ineq-exp-2018-rew} without restrictions $\sum_{k=1}^{n}\lambda_k=1$ and $\lambda_1,\lambda_2,\dotsc,\lambda_n<1$;
\item
inequality~\eqref{Qi-Exp-Ineq-sum-3} can be rewritten as
\begin{equation}\label{Qi-Exp-Ineq-sum-3-rew}
\frac{\bigl(\sum_{k=1}^n\lambda_k\bigr)^2}{e^{x/\sum_{k=1}^n\lambda_k}-1}
\ge\sum_{k=1}^{n} \frac{\lambda_k^2}{e^{x/\lambda_k}-1},
\end{equation}
where $\lambda_k>0$ and $x>0$.
\item
inequality~\eqref{Ouimet-lem-ineq} can be reformulated as
\begin{equation}\label{Ouimet-lem-iq-rew}
\sum_{i=1}^m\frac1{e^{x/\nu_i}-1}+\sum_{j=1}^n\frac1{e^{x/\tau_j}-1}
\ge\sum_{i=1}^m\sum_{j=1}^n\frac1{e^{x/\lambda_{ij}}-1}
\end{equation}
for $x=\ln y>0$ and $0<\lambda_{ij}\le1$, where $\nu_i=\sum_{j=1}^n\lambda_{ij}$ and $\tau_j=\sum_{i=1}^m\lambda_{ij}$ satisfying $\sum_{i=1}^m\nu_i=\sum_{j=1}^n\tau_j=1$.
\item
both proofs in~\cite[Lemma~1.4]{Ouimet-JMAA-2018} and~\cite[Lemma~A.1]{OUIMET-ARXIV-1907-05262} for inequalities~\eqref{Ouimet-Ineq-exp-2018} and~\eqref{Ouimet-lem-ineq} are not convincible;
\item
inequality~\eqref{Ouimet-lem-iq} refines~\eqref{Ouimet-lem-ineq} and~\eqref{Ouimet-lem-iq-rew} and removes off the restrictions $\lambda_{ij}\le1$ and $\sum_{i=1}^m\nu_i=\sum_{j=1}^n\tau_j=1$ appeared in~\cite[Lemma~A.1]{OUIMET-ARXIV-1907-05262};
\item
inequality~\eqref{Ouimet-Ineq-exp-2018-rew} generalizes~\eqref{Alzer-jmaa-2018Lem1-rearr};
\item
when taking $m=n$ and $\lambda_{1i}=\lambda_{i1}>0$ for $1\le i\le n$ and letting $\lambda_{ij}\to0^+$ for $2\le i,j\le n$, inequality~\eqref{Ouimet-lem-iq} becomes
\begin{gather*}
\frac1{e^{x/\sum_{j=1}^n\lambda_{1j}}-1}+\sum_{i=2}^n\frac1{e^{x/\lambda_{i1}}-1} +\frac1{e^{x/\sum_{i=1}^n\lambda_{i1}}-1} +\sum_{j=2}^n\frac1{e^{x/\lambda_{1j}}-1}\\
\ge2\sum_{i=1}^n\frac1{e^{x/\lambda_{i1}}-1}+2\sum_{i=1}^n\frac1{e^{x/\lambda_{i2}}-1} +\dotsm+2\sum_{i=1}^n\frac1{e^{x/\lambda_{in}}-1}\\
=2\sum_{i=1}^n\frac1{e^{x/\lambda_{i1}}-1}+\frac2{e^{x/\lambda_{12}}-1}+\frac2{e^{x/\lambda_{13}}-1}+\dotsm+\frac2{e^{x/\lambda_{1n}}-1}
\end{gather*}
which can be simplified as
\begin{equation}\label{mu-ij>2=0-Eq}
\frac1{e^{x/\sum_{i=1}^n\lambda_{i1}}-1}
\ge\sum_{i=1}^n\frac1{e^{x/\lambda_{i1}}-1};
\end{equation}
this inequality is equivalent to~\eqref{Ouimet-Ineq-exp-2018-rew} without restrictions $\sum_{k=1}^{n}\lambda_k=1$ and  $\lambda_1,\lambda_2,\dotsc,\lambda_n<1$.
\end{enumerate}
In a word, inequality~\eqref{Ouimet-lem-iq} established in~\cite[Theorem~3.1]{Ouimet-LCM-BKMS.tex} extends, generalizes, and refines all of the above inequalities other than~\eqref{Qi-Exp-Ineq-sum-3} and~\eqref{Qi-Exp-Ineq-sum-3-rew}.
\par
Motivated by inequalities~\eqref{Qi-Exp-Ineq-sum-3} and~\eqref{Qi-Exp-Ineq-sum-3-rew}, we would like to ask a question: what is the largest range of $\alpha$ such that
\begin{equation}\label{Qi-Exp-Ineq-quest}
\frac{\bigl(\sum_{k=1}^n\lambda_k\bigr)^\alpha}{e^{x/\sum_{k=1}^n\lambda_k}-1}
\ge\sum_{k=1}^{n} \frac{\lambda_k^\alpha}{e^{x/\lambda_k}-1}
\end{equation}
validates for $x>0$ and $\lambda_k>0$?
\par
Motivated by inequalities~\eqref{Ouimet-lem-iq} and~\eqref{Qi-Exp-Ineq-quest}, we would like to ask a question: what are the largest ranges of $\alpha$ and $\rho$ such that
\begin{equation}\label{epsilon-varpsilon-ineq}
\sum_{i=1}^m\frac{\nu_i^\alpha}{e^{x/\nu_i}-1}+\sum_{j=1}^n\frac{\tau_j^\alpha}{e^{x/\tau_j}-1}
\ge\rho\sum_{i=1}^m\sum_{j=1}^n\frac{\lambda_{ij}^\alpha}{e^{x/\lambda_{ij}}-1}
\end{equation}
is valid for $x>0$ and $\lambda_{ij}>0$? where $\nu_i=\sum_{j=1}^n\lambda_{ij}$ and $\tau_j=\sum_{i=1}^m\lambda_{ij}$.
\par
Motivated by proofs of inequalities~\eqref{Qi-Exp-Ineq-sum-O}, \eqref{Qi-Exp-Ineq-sum-3}, and~\eqref{Ouimet-lem-iq} in the papers~\cite{Q-Alzer-CM-Q.tex, Ouimet-LCM-BKMS.tex, Alzer-CM-JMAA.tex}, we would like to ask a question: what is the largest range of $\alpha$ such that the function $t^\alpha H\bigl(\frac1t\bigr)$ is convex on $(0,\infty)$?

\section{Lemmas}

The following lemmas are useful in this paper.

\begin{lemma}[\cite{avv1}]\label{le:2.1}
For $a,b\in\mathbb{R}$ with $a<b$, let $U(t)$ and $V(t)$ be continuous on $[a,b]$, differentiable on $(a,b)$, and $V'(t)\ne 0$ on $(a,b)$. If $\frac{U'(t)}{V'(t)}$ is decreasing on $(a,b)$, then the functions
$$
\mathcal{F}(t)=\frac{U(t)-U(a)}{V(t)-V(a)}\quad\text{and}\quad \mathcal{G}(t)=\frac{U(t)-U(b)}{V(t)-V(b)}
$$
are both decreasing on $(a,b)$.
\end{lemma}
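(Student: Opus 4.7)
The plan is to show that each of $\mathcal{F}$ and $\mathcal{G}$ has non-positive derivative on $(a,b)$, with Cauchy's mean value theorem doing the main work.

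First I would differentiate $\mathcal{F}$ by the quotient rule and regroup the result as
$$
\mathcal{F}'(t)=\frac{V'(t)}{V(t)-V(a)}\left[\frac{U'(t)}{V'(t)}-\frac{U(t)-U(a)}{V(t)-V(a)}\right].
$$
The hypothesis $V'\ne 0$ on $(a,b)$ makes $V$ strictly monotonic on $[a,b]$, so $V(t)-V(a)\ne 0$ for $t\in(a,b]$; moreover $V'(t)$ and $V(t)-V(a)$ share the same sign, so the prefactor $V'(t)/[V(t)-V(a)]$ is strictly positive on $(a,b)$. Next, Cauchy's mean value theorem applied on $[a,t]$ yields some $\xi\in(a,t)$ with $\frac{U(t)-U(a)}{V(t)-V(a)}=\frac{U'(\xi)}{V'(\xi)}$; since $U'/V'$ is decreasing and $\xi<t$, this ratio is at least $U'(t)/V'(t)$. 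Therefore the bracketed factor is non-positive, and $\mathcal{F}'(t)\le 0$.

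The argument for $\mathcal{G}$ is parallel, with two sign flips that cancel. Writing
$$
\mathcal{G}'(t)=\frac{V'(t)}{V(t)-V(b)}\left[\frac{U'(t)}{V'(t)}-\frac{U(t)-U(b)}{V(t)-V(b)}\right],
$$
the prefactor is now \emph{negative} (since $V'(t)$ and $V(b)-V(t)$ share a sign), while Cauchy's theorem applied on $[t,b]$ supplies $\eta\in(t,b)$ with $\frac{U(t)-U(b)}{V(t)-V(b)}=\frac{U'(\eta)}{V'(\eta)}\le\frac{U'(t)}{V'(t)}$, so the bracket is non-negative; the product is again $\le 0$, giving the decreasing monotonicity of $\mathcal{G}$.

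The only subtle point I expect is the sign bookkeeping around $V$: one must first exploit $V'\ne 0$ to conclude that $V$ is strictly monotonic on $[a,b]$, so that both $V(t)-V(a)$ and $V(b)-V(t)$ are nonzero and share a common sign with $V'(t)$, before the two displays above can legitimately be read as a product of a signed prefactor and a signed bracket. Once that sign analysis is in place, the rest is a direct application of Cauchy's theorem and the assumed monotonicity of $U'/V'$, so I do not anticipate any deeper obstacle.
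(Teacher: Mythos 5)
Your argument is correct: the paper itself gives no proof of this lemma (it is quoted from the cited monograph of Anderson, Vamanamurthy, and Vuorinen), and your derivation is the standard one for this ``monotone form of l'H\^opital's rule,'' combining the quotient rule, the sign analysis of $V$ forced by $V'\ne0$ (via Rolle's theorem, $V$ is injective and hence strictly monotonic), and the Cauchy mean value theorem on $[a,t]$ and $[t,b]$. No gaps; this matches the source's approach.
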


A function $\varphi:[0,\infty)\to\mathbb{R}$ is said to be star-shaped if $\varphi(\nu t)\le\nu\varphi(t)$ for $\nu\in[0,1]$ and $t\ge0$.
A real function $\varphi$ defined on a set $S\subset\mathbb{R}^n$ is said to be super-additive if $s,t\in S$ implies $s+t\in S$ and $\varphi(s+t)\ge\varphi(s)+\varphi(t)$. See~\cite[Chapter~16]{Marshall-Olkin-Arnold} and~\cite[Section~3.4]{Niculescu-Persson-Monograph-2018}.

\begin{lemma}[{\cite[pp.~650--651, Section~B.9]{Marshall-Olkin-Arnold}}]\label{MOA-B.9-650-651}
Among convex functions, star-shaped functions, and super-additive functions, these are the following relations:
\begin{enumerate}
\item
if $\varphi$ is convex on $[0,\infty)$ with $\varphi(0)\le0$, then $\varphi$ is star-shaped;
\item
if $\varphi:[0,\infty)\to\mathbb{R}$ is star-shaped, then $\varphi$ is super-additive.
\end{enumerate}
\end{lemma}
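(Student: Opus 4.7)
The plan is to verify each of the two implications directly from the definitions; both follow from elementary convex-combination manipulations, so no deeper machinery is needed.

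For part (1), I would observe that for any $\nu\in[0,1]$ and $t\ge 0$, the point $\nu t$ is the convex combination $\nu\cdot t+(1-\nu)\cdot 0$. Applying convexity of $\varphi$ on $[0,\infty)$ yields
\[
\varphi(\nu t)\le\nu\varphi(t)+(1-\nu)\varphi(0).
\]
Since $1-\nu\ge 0$ and $\varphi(0)\le 0$ by hypothesis, the second term is nonpositive, and hence $\varphi(\nu t)\le\nu\varphi(t)$, which is precisely the star-shaped property.

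For part (2), I would take $s,t\ge 0$ and first dispose of the degenerate case $s+t=0$: then $s=t=0$, and choosing $\nu=0$ in the star-shaped inequality forces $\varphi(0)\le 0$, so $\varphi(s)+\varphi(t)=2\varphi(0)\le\varphi(0)=\varphi(s+t)$. Otherwise set $u=s+t>0$ and $\nu_1=s/u$, $\nu_2=t/u$, so that $\nu_1,\nu_2\in[0,1]$ and $\nu_1+\nu_2=1$. The star-shaped property applied at $s=\nu_1u$ and at $t=\nu_2u$ gives $\varphi(s)\le\nu_1\varphi(u)$ and $\varphi(t)\le\nu_2\varphi(u)$; summing yields
\[
\varphi(s)+\varphi(t)\le(\nu_1+\nu_2)\varphi(u)=\varphi(s+t),
\]
which is super-additivity.

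There is essentially no obstacle in either step; the only subtlety worth flagging is that the argument in (2) needs $\varphi(0)\le 0$ to handle the degenerate case $s=t=0$, but this holds automatically because taking $\nu=0$ in the star-shaped inequality already forces $\varphi(0)\le 0\cdot\varphi(t)=0$. Everything else is a one-line convex-combination computation, and no regularity beyond what the definitions supply is required.
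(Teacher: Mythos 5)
Your proof is correct: the convex-combination argument for (1) and the splitting $s=\nu_1(s+t)$, $t=\nu_2(s+t)$ for (2), together with the observation that $\nu=0$ forces $\varphi(0)\le0$ in the degenerate case, is the standard argument. The paper itself gives no proof of this lemma---it is quoted directly from Marshall--Olkin--Arnold---so there is nothing to compare against, but your argument is exactly the one found in that reference.
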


\section{Convexity and logarithmic concavity of a function related to generating function of Bernoulli numbers}

Now we give an answer to the third question above and find something more.

\begin{theorem}\label{t-alpha-BGF-convex-thm}
Let $\alpha\in\mathbb{R}$ and
\begin{equation*}
\mathfrak{H}_\alpha(t)=t^\alpha H\biggl(\frac1t\biggr)=\frac{t^{\alpha-1}}{e^{1/t}-1}, \quad t\in(0,\infty).
\end{equation*}
Then
\begin{enumerate}
\item
if $\alpha\ge1$, the function $\mathfrak{H}_\alpha(t)$ is convex on $(0,\infty)$;
\item
if $0\le\alpha<1$, the function $\mathfrak{H}_\alpha(t)$ has a unique inflection point on $(0,\infty)$;
\item
if $\alpha<0$, the function $\mathfrak{H}_\alpha(t)$ has only two inflection points on $(0,\infty)$;
\item
the function $\mathfrak{H}_\alpha(t)$ has the limits
\begin{equation}\label{frak-H-limit=020}
\lim_{t\to0^+}\mathfrak{H}_\alpha(t)=0, \quad \alpha\in\mathbb{R}
\end{equation}
and
\begin{equation}\label{lim-infty-eq}
\lim_{t\to\infty}\mathfrak{H}_\alpha(t)
=\begin{dcases}
\infty, & \alpha>0;\\
1, & \alpha=0;\\
0, & \alpha<0.
\end{dcases}
\end{equation}
\end{enumerate}
\end{theorem}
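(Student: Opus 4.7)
The plan is to reduce the problem to the sign analysis of a single auxiliary function $W(s)$ via the substitution $s=1/t$. Writing $\mathfrak{H}_\alpha(t)=s^{1-\alpha}/(e^{s}-1)$ and applying the chain rule, a direct computation should yield
$$
\mathfrak{H}_\alpha''(t)=\frac{s^{3-\alpha}}{(e^{s}-1)^{3}}\,W(s),\qquad s=1/t,
$$
where
$$
W(s)=(\alpha-1)(\alpha-2)(e^{s}-1)^{2}-2(2-\alpha)\,s e^{s}(e^{s}-1)+s^{2}e^{s}(e^{s}+1).
$$
Since the prefactor is strictly positive on $(0,\infty)$, the sign of $\mathfrak{H}_\alpha''(t)$ on $(0,\infty)$ coincides with that of $W(s)$, and counting inflection points of $\mathfrak{H}_\alpha$ amounts to counting sign changes of $W$ in $s\in(0,\infty)$.

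The key structural observation is that $W(s,\alpha)$ is a quadratic polynomial in $\alpha$ with positive leading coefficient $(e^{s}-1)^{2}$. I plan to compute the discriminant and simplify it to $(e^{s}-1)^{2}\bigl\{(e^{s}-1)^{2}+4se^{s}(e^{s}-1-s)\bigr\}$, which is positive on $(0,\infty)$ because $e^{s}>1+s$. This yields two smooth real roots $\alpha_{-}(s)<\alpha_{+}(s)$ such that $W(s,\alpha)>0$ iff $\alpha\notin(\alpha_{-}(s),\alpha_{+}(s))$. A Taylor expansion at $s=0$ reveals $\alpha_{-}(0)=0$ and $\alpha_{+}(0)=1$, while Vieta's formulas together with the large-$s$ asymptotics $2se^{s}/(e^{s}-1)\sim2s$ and $s^{2}e^{s}(e^{s}+1)/(e^{s}-1)^{2}\sim s^{2}$ force $\alpha_{\pm}(s)\to-\infty$.

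The main obstacle is to establish that both $\alpha_{-}$ and $\alpha_{+}$ are \emph{strictly decreasing} on $(0,\infty)$. My strategy is implicit differentiation of $W(s,\alpha(s))=0$: one has $\alpha'(s)=-\partial_{s}W/\partial_{\alpha}W$, and since $\partial_{\alpha}W>0$ at $\alpha_{+}$ but $\partial_{\alpha}W<0$ at $\alpha_{-}$, the claim reduces to the sign conditions $\partial_{s}W(s,\alpha_{+}(s))>0$ and $\partial_{s}W(s,\alpha_{-}(s))<0$. I expect to verify these by substituting the quadratic-formula expressions for $\alpha_{\pm}$ into $\partial_{s}W$ and simplifying with the basic inequality $e^{s}-1-s>0$; should the algebra become unwieldy, a fallback is to bound the number of zeros of $W(\cdot,\alpha)$ in $s$ directly by an iterated Rolle's theorem argument applied to successive derivatives of $W$ in $s$.

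Granted the monotonicity of $\alpha_{\pm}$, the three structural assertions can be read off from the geometry of a horizontal line at height $\alpha$ meeting the two strictly decreasing branches $\alpha=\alpha_{\pm}(s)$: for (1), $\alpha\ge1=\alpha_{+}(0)$ lies strictly above $\alpha_{+}$ on $(0,\infty)$, so $W>0$ everywhere and $\mathfrak{H}_\alpha$ is convex; for (2), $\alpha\in[0,1)$ crosses only $\alpha_{+}$, and exactly once, producing the unique inflection point; for (3), $\alpha<0$ crosses each of $\alpha_{-}$ and $\alpha_{+}$ exactly once, producing two inflection points. Finally, the limits in (4) are elementary: as $t\to0^{+}$ the factor $e^{1/t}-1$ dominates any power of $t$, forcing $\mathfrak{H}_\alpha(t)\to0$; as $t\to\infty$ one has $e^{1/t}-1\sim1/t$, so $\mathfrak{H}_\alpha(t)\sim t^{\alpha}$, which tends to $\infty$, $1$, or $0$ according as $\alpha$ is positive, zero, or negative.
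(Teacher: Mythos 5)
Your framework is, up to the relabeling $s=1/t$, exactly the paper's: your $W(s)$ is precisely the numerator that the paper packages into $H_\alpha$, your discriminant $(e^s-1)^2\bigl[(e^s-1)^2+4se^s(e^s-1-s)\bigr]$ is $(e^s-1)^2$ times the paper's radicand $(4s+1)e^{2s}-2(2s^2+2s+1)e^s+1$, and your two root branches $\alpha_{+}(s)$ and $\alpha_{-}(s)$ are $\frac32-\frac12\mathcal{H}_2(s)$ and $\frac32-\frac12\mathcal{H}_1(s)$ in the paper's notation, with the same boundary values $1$, $0$, and $-\infty$. The reduction of parts (1)--(3) to the strict decrease of both branches, and the elementary limits in part (4), are correct and agree with what the paper does.

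The genuine gap is that the strict decrease of $\alpha_{\pm}$ --- which you yourself flag as ``the main obstacle'' --- is never actually established: you offer only a plan (implicit differentiation plus ``simplifying with $e^{s}-1-s>0$'') and an unspecified fallback (iterated Rolle). This is exactly where essentially all of the work in the paper's proof lies. Showing that $\mathcal{H}_1$ and $\mathcal{H}_2$ are increasing requires two applications of the monotone form of l'H\^opital's rule for each branch together with several delicate power-series positivity verifications; for the harder branch one must prove positivity of
$4e^{4t}-2(7t+4)e^{3t}-t\bigl(t^2-18t-18\bigr)e^{2t}-\bigl(6t^3+12t^2-6t-8\bigr)e^t-t^3-6t^2-10t-4$,
whose Taylor coefficients only become nonnegative in a way that has to be checked term by term starting at $t^6$. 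Substituting the quadratic-formula expressions for $\alpha_{\pm}$ into $\partial_s W$ produces expressions of exactly this complexity, so the hope that the single inequality $e^s>1+s$ suffices is not realistic, and the Rolle fallback is too vague to assess. Until this monotonicity step is carried out, parts (1)--(3) are unproven; everything downstream of it in your argument is sound.
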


\begin{proof}
By direct computation, we have
\begin{equation*}
\frac{\td\mathfrak{H}_\alpha(t)}{\td t}=\frac{t^{\alpha-3} \bigl([(\alpha-1)t+1]e^{1/t}+(1-\alpha)t\bigr)}{(e^{1/t}-1)^2}
\end{equation*}
and
\begin{align*}
\frac{\td{}^2\mathfrak{H}_\alpha(t)}{\td t^2}
&=\frac{t^{\alpha-5}
\left(\begin{gathered}(\alpha-1)(\alpha-2)t^2\\
-\bigl[2(\alpha-1)(\alpha-2)t^2+2(\alpha-2)t-1\bigr]e^{1/t}\\
+\bigl[(\alpha-1)(\alpha-2)t^2+2(\alpha-2)t+1\bigr]e^{2/t}\end{gathered}\right)} {(e^{1/t}-1)^3}\\
&\triangleq\frac{t^{\alpha-5}}{(e^{1/t}-1)^3}H_\alpha\biggl(\frac{1}{t}\biggr),
\end{align*}
where
\begin{align*}
H_\alpha(t)&=\frac{\Biggl(\begin{gathered}
(\alpha-1)(\alpha-2)+\bigl[t^2-2(\alpha-2)t-2(\alpha-2)(\alpha-1)\bigr]e^t\\
+\bigl[t^2+2(\alpha-2)t+(\alpha-1)(\alpha-2)\bigr]e^{2t}\end{gathered}\Biggr)}{t^2}\\
&=\frac{1}{t^2}\Biggl[\alpha-\frac32+\frac{2te^t+\sqrt{(4t+1)e^{2t}-2(2t^2+2t+1)e^t+1}\,}{2(e^t-1)}\Biggr]\\
&\quad\times\Biggl[\alpha-\frac32+\frac{2te^t-\sqrt{(4t+1)e^{2t}-2(2t^2+2t+1)e^t+1}\,}{2(e^t-1)}\Biggr]\\
&\triangleq\frac{1}{t^2}\biggl[\alpha-\frac32+\frac{\mathcal{H}_1(t)}2\biggr] \biggl[\alpha-\frac32+\frac{\mathcal{H}_2(t)}2\biggr]
\end{align*}
and
\begin{gather*}
(4t+1)e^{2t}-2(2t^2+2t+1)e^t+1=t^2+\sum_{k=3}^{\infty}\big[2k\bigl(2^k-2k\bigr)+2^k-2\bigr]\frac{t^k}{k!}\\
=t^2+3t^3+\frac{13t^4}{4}+\frac{25t^5}{12}+\frac{343t^6}{360}+\frac{41t^7}{120} +\frac{2047t^8}{20160}+\frac{1567t^9}{60480}+\dotsm>0.
\end{gather*}
By straightforward calculation, we have
\begin{equation*}
\lim_{t\to0}\mathcal{H}_1(t)=3, \quad \lim_{t\to\infty}\mathcal{H}_1(t)=\infty, \quad \lim_{t\to0}\mathcal{H}_2(t)=1, \quad \lim_{t\to\infty}\mathcal{H}_2(t)=\infty.
\end{equation*}
\par
Since
\begin{gather*}
\frac{\bigl[2te^t+\sqrt{(4t+1)e^{2t}-2(2t^2+2t+1)e^t+1}\,\bigr]'}{(e^t-1)'}\\
=\frac{(4t+3)e^t-(2t^2+6t+3)}{\sqrt{(4t+1)e^{2t}-2(2t^2+2t+1)e^t+1}\,}+2t+2,\\
(4t+3)e^t-(2t^2+6t+3)=(2t+1)t+(4t+3)\bigl(e^t-1-t\bigr)>0,\\
\frac{\bigl([(4t+3)e^t-(2t^2+6t+3)]^2\bigr)'}{[(4t+1)e^{2t}-2(2t^2+2t+1)e^t+1]'}
=4t+7-2\frac{2t+3}{e^t},
\end{gather*}
and
$$
\frac{\td}{\td t}\biggl(\frac{2t+3}{e^t}\biggr)=-\frac{2t+1}{e^t},
$$
making use of Lemma~\ref{le:2.1} twice, we can deduce that the function $\mathcal{H}_1(t)$ is increasing on $(0,\infty)$.
\par
Since
\begin{gather*}
\Biggl[\frac{2te^t-\sqrt{(4t+1)e^{2t}-2(2t^2+2t+1)e^t+1}\,}{e^t-1}\Biggr]'\\
=\frac{2e^t\bigl(e^t-t-1\bigr)\left[\begin{gathered}\sqrt{(4t+1)e^{2t}-2(2t^2+2t+1)e^t+1}\,\\
-\frac{e^{2t}+\bigl(t^2-3t-2\bigr)e^t+t^2+3t+1}{e^t-t-1}\end{gathered}\right]} {(e^t-1)^2\sqrt{(4t+1)e^{2t}-2(2t^2+2t+1)e^t+1}\,},\\
e^{2t}+\bigl(t^2-3t-2\bigr)e^t+t^2+3t+1
=\sum_{k=3}^{\infty}\bigl[2\bigl(2^{k-1}-1\bigr)+k(k-4)\bigr]\frac{t^k}{k!}\\
=\frac{t^3}{2}+\frac{7t^4}{12}+\frac{7t^5}{24}+\frac{37t^6}{360}+\frac{7t^7}{240}+\frac{143t^8}{20160}+\frac{37t^9}{24192}+\dotsm>0,
\end{gather*}
and
\begin{gather*}
\Bigl[\sqrt{(4t+1)e^{2t}-2(2t^2+2t+1)e^t+1}\,\Bigr]^2
-\frac{\left[\begin{gathered}e^{2t}+\bigl(t^2-3t-2\bigr)e^t\\ +t^2+3t+1\end{gathered}\right]^2}{(e^t-t-1)^2}\\
=\frac{t\Biggl[\begin{gathered}4e^{4t}-2(7t+4)e^{3t}-t\bigl(t^2-18t-18\bigr)e^{2t}\\
-\bigl(6t^3+12t^2-6t-8\bigr)e^t-t^3-6t^2-10t-4\end{gathered}\Biggr]}{(e^t-t-1)^2}\\
=\frac{t}{24(e^t-t-1)^2}\sum_{k=6}^{\infty}\left[\begin{gathered}192+144k+144k^2-144k^3\\
-k\bigl(3k^2-117k-102\bigr)2^k\\
-(112k+192)3^k+96\times4^k\end{gathered}\right]\frac{t^k}{k!}\\
=\frac{t}{24(e^t-t-1)^2}\sum_{k=6}^{\infty}
\left[\begin{gathered}16\bigl(2^{2k}-9k^3\bigr)+144k^2+144k+192\\
+\bigl(12\times2^k-3k^3\bigr)2^k+k(117k+102)2^k\\
+68\times4^k-16(7k+12)3^k\end{gathered}\right]\frac{t^k}{k!}\\
=\frac{t}{(e^t-t-1)^2}\biggl(\frac{t^6}{3}+\frac{3t^7}{4}+\frac{17t^8}{20}+\frac{1403t^9}{2160}+\frac{1433t^{10}}{3780}+\dotsm\biggr)
>0
\end{gather*}
on $(0,\infty)$, the function $\mathcal{H}_2(t)$ is increasing on $(0,\infty)$.
\par
From the above increasing monotonicity of $\mathcal{H}_1(t)$ and $\mathcal{H}_2(t)$ on $(0,\infty)$, it follows that, if and only if $\alpha\ge1$, the function $H_\alpha(t)$ is positive on $(0,\infty)$. Therefore, if and only if $\alpha\ge1$, the second derivative $\frac{\td{}^2\mathfrak{H}_\alpha(t)}{\td t^2}$ is positive on $(0,\infty)$. Consequently, if and only if $\alpha\ge1$, the function $\mathfrak{H}_\alpha(t)$ is convex on $(0,\infty)$.
\par
The proof of the existence of inflection points of the function $\mathfrak{H}_\alpha(t)$ on $(0,\infty)$ is straightforward.
\par
It is easy to see
\begin{equation*}
\lim_{t\to0^+}\mathfrak{H}_\alpha(t)=\lim_{t\to0^+}\frac{t^{\alpha-1}}{e^{1/t}-1}
=\lim_{s\to\infty}\frac{s^{1-\alpha}}{e^s-1}=0.
\end{equation*}
Since
\begin{equation}\label{2lim-0-infty}
t\bigl(e^{1/t}-1\bigr)
\to
\begin{dcases}
1, & t\to\infty\\
\infty, & t\to0^+
\end{dcases}
\end{equation}
and
\begin{equation*}
\mathfrak{H}_\alpha(t)=\frac{t^{\alpha}}{[t(e^{1/t}-1)]}, \quad t\in(0,\infty),
\end{equation*}
the limits in~\eqref{lim-infty-eq} follow immediately. The proof of Theorem~\ref{t-alpha-BGF-convex-thm} is complete.
\end{proof}

\begin{theorem}\label{log-concave-thm}
Let $\alpha\in\mathbb{R}$. Then
\begin{enumerate}
\item
if $\alpha\ge0$, the function $\mathfrak{H}_\alpha(t)$ is logarithmically concave on $(0,\infty)$;
\item
if $\alpha<0$, the logarithm $\ln\mathfrak{H}_\alpha(t)$ has a unique inflection point on $(0,\infty)$;
\item
if $\alpha\ge0$, the function $\mathfrak{H}_\alpha(t)$ is increasing on $(0,\infty)$;
\item
if $\alpha<0$, the function $\mathfrak{H}_\alpha(t)$ has a unique maximum on $(0,\infty)$.
\end{enumerate}
\end{theorem}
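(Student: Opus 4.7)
The plan is to study the first and second logarithmic derivatives of $\mathfrak{H}_\alpha$. Writing $\psi_\alpha(t):=\ln\mathfrak{H}_\alpha(t)=(\alpha-1)\ln t-\ln(e^{1/t}-1)$ and substituting $u=1/t$ after differentiation, I obtain
\begin{equation*}
\psi_\alpha'(t)=u\bigl[(\alpha-1)+Z(u)\bigr],\qquad
\psi_\alpha''(t)=u^2\bigl[(1-\alpha)-R(u)\bigr],
\end{equation*}
where $T(u):=\frac{u}{e^u-1}$, $Z(u):=\frac{ue^u}{e^u-1}=T(u)+u$, and $R(u):=Z(u)\bigl(2-T(u)\bigr)=\frac{ue^u(2e^u-2-u)}{(e^u-1)^2}$. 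All four parts then reduce to tracking the signs of the brackets $(\alpha-1)+Z(u)$ and $(1-\alpha)-R(u)$ as $u$ ranges over $(0,\infty)$.

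The analytic core is to show that both $Z$ and $R$ are strictly increasing bijections of $(0,\infty)$ onto $(1,\infty)$. For $Z$ this is classical: the limits are immediate, and $Z'(u)>0$ reduces to $1-(1+u)e^{-u}>0$, which vanishes at $u=0$ and has positive derivative $ue^{-u}$. For $R$, I would exploit the factorization $R=(T+u)(2-T)$ to obtain
\begin{equation*}
R'(u)=T'(u)\bigl(2-2T(u)-u\bigr)+\bigl(2-T(u)\bigr).
\end{equation*}
Since $T$ is strictly decreasing with $T'(u)<0$ and $0<T(u)<1$ (both seen by the same ``vanish-at-$0$ plus positive derivative'' pattern), the second summand is positive; it remains to show $2-2T(u)-u<0$ on $(0,\infty)$. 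This quantity vanishes at $u=0$, so it suffices to check that its derivative $-2T'(u)-1$ is negative, i.e.\ that $2T'(u)+1>0$. After clearing the denominator $(e^u-1)^2$, this becomes $e^{2u}-1-2ue^u>0$, which vanishes at $u=0$ and has derivative $2e^u(e^u-1-u)>0$, so is positive on $(0,\infty)$. This nested pair of ``vanish-at-$0$ plus positive-derivative'' reductions is where I expect the main effort of the proof to lie; everything else is either the chain rule or the routine limit computations $Z(u),R(u)\to1$ as $u\to0^+$ and $Z(u)\sim u$, $R(u)\sim2u$ as $u\to\infty$.

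Once $Z$ and $R$ are known to map $(0,\infty)$ strictly monotonically onto $(1,\infty)$, all four conclusions follow quickly. For (3), $\alpha\ge0$ forces $(\alpha-1)+Z(u)>\alpha\ge0$, so $\psi_\alpha'(t)>0$ and $\mathfrak{H}_\alpha$ is increasing. For (1), $(1-\alpha)-R(u)\le1-R(u)<0$, so $\psi_\alpha''(t)<0$ and $\mathfrak{H}_\alpha$ is strictly logarithmically concave. For $\alpha<0$ the value $1-\alpha$ lies in the common range $(1,\infty)$ of $Z$ and $R$, and strict monotonicity produces a unique $u^{*}>0$ with $Z(u^{*})=1-\alpha$, yielding the unique sign change of $\psi_\alpha'$ and hence the unique maximum of $\mathfrak{H}_\alpha$ in (4), together with a unique $u^{**}>0$ with $R(u^{**})=1-\alpha$, yielding the unique sign change of $\psi_\alpha''$ and hence the unique inflection point of $\ln\mathfrak{H}_\alpha$ in (2).
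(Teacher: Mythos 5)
Your proposal is correct, and its skeleton coincides with the paper's: both reduce everything to the sign of the first and second logarithmic derivatives, written as a positive factor times a bracket involving an auxiliary function of $u=1/t$. In fact your bracket $(1-\alpha)-R(u)$ is literally the paper's $\mathscr{H}(u)-\alpha$, since $1-R(u)=\frac{1+(u^2+2u-2)e^u+(1-2u)e^{2u}}{(e^u-1)^2}=\mathscr{H}(u)$, so proving $R$ strictly increasing from $1$ to $\infty$ is exactly proving $\mathscr{H}$ strictly decreasing from $0$ to $-\infty$. Where you genuinely diverge is in the execution. The paper establishes the monotonicity of $\mathscr{H}$ by two applications of the monotone form of l'H\^opital's rule (its Lemma~\ref{le:2.1}), whereas you differentiate directly, exploiting the factorization $R=(T+u)(2-T)$ with $T(u)=\frac{u}{e^u-1}$ and settling each sign by a nested ``vanishes at $0$, derivative of one sign'' argument ($e^{2u}-1-2ue^u>0$, etc.); I checked these reductions and they are all correct, so your route is self-contained and avoids the external lemma. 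The second difference is the first derivative: you prove $Z(u)=\frac{ue^u}{e^u-1}$ is an increasing bijection of $(0,\infty)$ onto $(1,\infty)$ and read off the sign of $\psi_\alpha'$ directly, which immediately yields both the monotonicity in part (3) and the unique zero of $\psi_\alpha'$ in part (4); the paper instead deduces the sign of $[\ln\mathfrak{H}_\alpha]'$ indirectly, from its decreasingness (a consequence of log-concavity) together with its limits at $0^+$ and $\infty$, and its argument for parts (2) and (4) in the case $\alpha<0$ is correspondingly terser. Your version buys a cleaner, more quantitative account of the $\alpha<0$ case (the critical points are characterized as $Z^{-1}(1-\alpha)$ and $R^{-1}(1-\alpha)$); the paper's buys brevity by recycling a lemma it needs elsewhere anyway.
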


\begin{proof}
By standard computation and by virtue of~\eqref{2lim-0-infty}, we have
\begin{equation}\label{first-deriv-lim}
\frac{\td{}\ln\mathfrak{H}_\alpha(t)}{\td t}
=
\begin{dcases}
\frac{1}{(1-e^{-1/t})t^2}+\frac{a-1}{t} \to\infty, & t\to0^+\\
\frac{e^{1/t}}{[t(e^{1/t}-1)]t}+\frac{a-1}{t} \to 0, & t\to\infty
\end{dcases}
\end{equation}
and
\begin{align*}
\frac{\td{}^2\ln\mathfrak{H}_\alpha(t)}{\td t^2}
&=\frac{1}{t^2}\biggl[\frac{t^2+\bigl(1+2t-2t^2\bigr)e^{1/t}+t(t-2)e^{2/t}}{(e^{1/t}-1)^2 t^2}-\alpha\biggr]\\
&\triangleq\frac{1}{t^2}\biggl[\mathscr{H}\biggl(\frac1t\biggr)-\alpha\biggr],
\end{align*}
where
\begin{equation*}
\mathscr{H}(t)=\frac{1+\bigl(t^2+2t-2\bigr)e^t+(1-2t)e^{2t}}{(e^t-1)^2}
\to
\begin{dcases}
0 & t\to 0^+;\\
-\infty, & t\to\infty.
\end{dcases}
\end{equation*}
By standard calculation, we have
\begin{gather*}
\frac{[1+(t^2+2t-2)e^t+(1-2t)e^{2t}]'}{[(e^t-1)^2]'}=\frac{t(4+t-4e^t)}{2(e^t-1)},\\
\frac{[t(4+t-4e^t)]'}{(e^t-1)'}=2\biggl[\frac{2+t}{e^t}-2(t+1)\biggr], \quad \biggl(\frac{2+t}{e^t}\biggr)'=-\frac{1+t}{e^t}<0.
\end{gather*}
Employing Lemma~\ref{le:2.1} twice arrives at that the function $\mathscr{H}(t)$ is decreasing on $(0,\infty)$. Then the function $\mathscr{H}\bigl(\frac1t\bigr)$ is increasing on $(0,\infty)$, with the limits
\begin{equation}\label{second-H-deriv-lim}
\lim_{t\to0^+}\mathscr{H}\biggl(\frac1t\biggr)=-\infty \quad\text{and}\quad \lim_{t\to\infty}\mathscr{H}\biggl(\frac1t\biggr)=0.
\end{equation}
Accordingly, the function $\mathscr{H}\bigl(\frac1t\bigr)$ is negative on $(0,\infty)$ and, if $\alpha\ge0$, the second derivative $\frac{\td{}^2\ln\mathfrak{H}_\alpha(t)}{\td t^2}$ is negative on $(0,\infty)$. Hence, if $\alpha\ge0$, the function $\mathfrak{H}_\alpha(t)$ is logarithmically concave on $(0,\infty)$ and the first derivative $\frac{\td{}\ln\mathfrak{H}_\alpha(t)}{\td t}$ is decreasing on $(0,\infty)$. Combining this with the limits in~\eqref{first-deriv-lim} reveals that, if $\alpha\ge0$, the first derivative $\frac{\td{}\ln\mathfrak{H}_\alpha(t)}{\td t}$ is positive on $(0,\infty)$. This means that, if $\alpha\ge0$, the function $\mathfrak{H}_\alpha(t)$ is increasing on $(0,\infty)$.
\par
It is not difficult to see that, if $\alpha<0$, by the limits in~\eqref{first-deriv-lim} and~\eqref{second-H-deriv-lim}, the second derivative $\frac{\td{}^2\ln\mathfrak{H}_\alpha(t)}{\td t^2}$ has a zero, the first derivative $\frac{\td{}\ln\mathfrak{H}_\alpha(t)}{\td t}$ has only one zero, and the function $\ln\mathfrak{H}_\alpha(t)$ has only one inflection point and has only one maximum point on $(0,\infty)$.
The proof of Theorem~\ref{log-concave-thm} is complete.
\end{proof}

\begin{remark}
It is well known~\cite[Section~1.3]{Niculescu-Persson-Monograph-2018} that a logarithmically convex function must be convex, but not conversely. It is also well known~\cite[Section~1.3]{Niculescu-Persson-Monograph-2018} that a concave function must be logarithmically concave, but not conversely. The function $\mathfrak{H}_\alpha(t)$ is an example that a logarithmically concave function may not be concave, that a convex function may not be logarithmically convex, and so on.
\end{remark}

\section{Three new inequalities involving exponential functions and sums}

Making use of some conclusions in Theorems~\ref{t-alpha-BGF-convex-thm} and~\ref{log-concave-thm}, we now start out to derive several inequalities involving exponential functions and sums and to answer the first and second questions above.

\begin{theorem}\label{alpha>=1-thm}
For $\alpha\ge1$, $x>0$, and $\lambda_{ij}>0$ for $1\le i\le m$ and $1\le j\le n$, denote $\nu_i=\sum_{j=1}^n\lambda_{ij}$ and $\tau_j=\sum_{i=1}^m\lambda_{ij}$. Then
\begin{equation}\label{alpha>=1-inequal}
\sum_{i=1}^m\frac{\nu_i^{\alpha-1}}{e^{x/\nu_i}-1}+ \sum_{j=1}^n\frac{\tau_j^{\alpha-1}}{e^{x/\tau_j}-1}
\ge2\sum_{i=1}^m\sum_{j=1}^n\frac{\lambda_{ij}^{\alpha-1}}{e^{x/\lambda_{ij}}-1}.
\end{equation}
\end{theorem}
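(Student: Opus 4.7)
The plan is to recognize that the expression $\dfrac{\lambda^{\alpha-1}}{e^{x/\lambda}-1}$ is (up to a factor of $x^{\alpha-1}$) precisely $\mathfrak{H}_\alpha(\lambda/x)$, and then to reduce the desired inequality to super-additivity of $\mathfrak{H}_\alpha$ on $[0,\infty)$ via the chain Theorem~\ref{t-alpha-BGF-convex-thm}(1)~$\Rightarrow$ Lemma~\ref{MOA-B.9-650-651}(1)~$\Rightarrow$ Lemma~\ref{MOA-B.9-650-651}(2).

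First I would substitute $t=\lambda/x$ in the identity $\mathfrak{H}_\alpha(t)=t^{\alpha-1}/(e^{1/t}-1)$ to obtain
\begin{equation*}
x^{\alpha-1}\mathfrak{H}_\alpha(\lambda/x)=\frac{\lambda^{\alpha-1}}{e^{x/\lambda}-1},
\end{equation*}
so that, after dividing both sides of~\eqref{alpha>=1-inequal} by the positive factor $x^{\alpha-1}$, the inequality to be proved becomes
\begin{equation*}
\sum_{i=1}^m\mathfrak{H}_\alpha\bigl(\nu_i/x\bigr)+\sum_{j=1}^n\mathfrak{H}_\alpha\bigl(\tau_j/x\bigr)\ge 2\sum_{i=1}^m\sum_{j=1}^n\mathfrak{H}_\alpha\bigl(\lambda_{ij}/x\bigr).
\end{equation*}

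Second, under the hypothesis $\alpha\ge1$, Theorem~\ref{t-alpha-BGF-convex-thm}(1) gives convexity of $\mathfrak{H}_\alpha$ on $(0,\infty)$, while the limit~\eqref{frak-H-limit=020} allows us to extend $\mathfrak{H}_\alpha$ continuously to $[0,\infty)$ by setting $\mathfrak{H}_\alpha(0)=0$, and this extension remains convex on $[0,\infty)$. Since $\mathfrak{H}_\alpha(0)=0\le0$, Lemma~\ref{MOA-B.9-650-651}(1) shows that $\mathfrak{H}_\alpha$ is star-shaped on $[0,\infty)$, and Lemma~\ref{MOA-B.9-650-651}(2) then shows that $\mathfrak{H}_\alpha$ is super-additive on $[0,\infty)$. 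Iterating the two-variable super-additivity, we obtain the finite-sum version $\mathfrak{H}_\alpha\bigl(\sum_k t_k\bigr)\ge\sum_k\mathfrak{H}_\alpha(t_k)$ for any $t_k\ge 0$.

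Third, applying this to $\nu_i/x=\sum_{j=1}^n\lambda_{ij}/x$ and then summing over $i$ yields
\begin{equation*}
\sum_{i=1}^m\mathfrak{H}_\alpha(\nu_i/x)\ge\sum_{i=1}^m\sum_{j=1}^n\mathfrak{H}_\alpha(\lambda_{ij}/x),
\end{equation*}
and analogously, applying it to $\tau_j/x=\sum_{i=1}^m\lambda_{ij}/x$ and summing over $j$ yields the same bound. Adding these two inequalities produces exactly the reformulated target, hence~\eqref{alpha>=1-inequal}. The argument is essentially a soft assembly; there is no hard analytical step beyond what has already been done in Theorem~\ref{t-alpha-BGF-convex-thm}, so the only thing to be careful about is the technicality of justifying that the continuous extension at $0$ preserves convexity, which is standard.
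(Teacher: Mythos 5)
Your proposal is correct and follows essentially the same route as the paper: both reduce~\eqref{alpha>=1-inequal} to super-additivity of $\mathfrak{H}_\alpha$ on $[0,\infty)$ (with $\mathfrak{H}_\alpha(0)=0$) via the chain convex $\Rightarrow$ star-shaped $\Rightarrow$ super-additive from Lemma~\ref{MOA-B.9-650-651}, then apply it to $\nu_i/x$ and $\tau_j/x$ and add. Your explicit remark about iterating two-variable super-additivity to the finite-sum version and about the convexity of the extension at $0$ only makes explicit what the paper leaves implicit.
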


\begin{proof}
Combining the first conclusion and the limit~\eqref{frak-H-limit=020} in Theorem~\ref{t-alpha-BGF-convex-thm} with Lemma~\ref{MOA-B.9-650-651} yields that, if $\alpha\ge1$, the function $\mathfrak{H}_\alpha(t)$ with redefining $\mathfrak{H}_\alpha(0)=0$ is convex, then star-shaped, and then supper-additive on $[0,\infty)$. Consequently, it follows that
\begin{equation*}
\mathfrak{H}_\alpha\biggl(\frac{\nu_i}{x}\biggr)
=\mathfrak{H}_\alpha\Biggl(\frac{\sum_{j=1}^n\lambda_{ij}}{x}\Biggr)
\ge\sum_{j=1}^n\mathfrak{H}_\alpha\biggl(\frac{\lambda_{ij}}{x}\biggr)
\end{equation*}
and
\begin{equation*}
\mathfrak{H}_\alpha\biggl(\frac{\tau_j}{x}\biggr)
=\mathfrak{H}_\alpha\Biggl(\frac{\sum_{i=1}^m\lambda_{ij}}{x}\Biggr)
\ge\sum_{i=1}^m\mathfrak{H}_\alpha\biggl(\frac{\lambda_{ij}}{x}\biggr).
\end{equation*}
Accordingly, we obtain
\begin{align*}
\sum_{i=1}^m\mathfrak{H}_\alpha\biggl(\frac{\nu_i}{x}\biggr)+ \sum_{j=1}^n\mathfrak{H}_\alpha\biggl(\frac{\tau_j}{x}\biggr)
&\ge\sum_{i=1}^m\sum_{j=1}^n\mathfrak{H}_\alpha\biggl(\frac{\lambda_{ij}}{x}\biggr) +\sum_{j=1}^n\sum_{i=1}^m\mathfrak{H}_\alpha\biggl(\frac{\lambda_{ij}}{x}\biggr)\\
&=2\sum_{i=1}^m\sum_{j=1}^n\mathfrak{H}_\alpha\biggl(\frac{\lambda_{ij}}{x}\biggr)
\end{align*}
which can be rearranged as
\begin{equation*}
\sum_{i=1}^m\frac{(\nu_i/x)^{\alpha-1}}{e^{x/\nu_i}-1}+ \sum_{j=1}^n\frac{(\tau_j/x)^{\alpha-1}}{e^{x/\tau_j}-1}
\ge2\sum_{i=1}^m\sum_{j=1}^n\frac{(\lambda_{ij}/x)^{\alpha-1}}{e^{x/\lambda_{ij}}-1}.
\end{equation*}
The proof of Theorem~\ref{alpha>=1-thm} is complete.
\end{proof}

\begin{remark}
As the deduction of~\eqref{mu-ij>2=0-Eq}, setting $m=n$ and $\lambda_{1k}=\lambda_{k1}=\lambda_k>0$ for $1\le k\le n$ and letting $\lambda_{ij}\to0^+$ for $2\le i,j\le n$ in inequality~\eqref{alpha>=1-inequal} result in inequality~\eqref{Qi-Exp-Ineq-quest} for $\alpha\ge0$.
\par
The inequality~\eqref{alpha>=1-inequal} is equivalent to~\eqref{epsilon-varpsilon-ineq} for $\alpha\ge0$ and $\rho\le2$.
\end{remark}

\begin{theorem}\label{alpha>=1-3-thm}
Let $\alpha\ge1$, $x>0$, and $\lambda_{ijk}>0$ for $1\le i\le\ell$, $1\le j\le m$, and $1\le k\le n$. Then
\begin{multline}\label{2:1Sum-Ineq}
\sum_{k=1}^n\sum_{j=1}^m\frac{\bigl(\sum_{i=1}^\ell\lambda_{ijk}\bigr)^{\alpha-1}}{e^{x/\sum_{i=1}^\ell\lambda_{ijk}}-1}
+\sum_{i=1}^\ell\sum_{k=1}^n\frac{\bigl(\sum_{j=1}^m\lambda_{ijk}\bigr)^{\alpha-1}}{e^{x/\sum_{j=1}^m\lambda_{ijk}}-1}\\
+\sum_{j=1}^m\sum_{i=1}^\ell\frac{\bigl(\sum_{k=1}^n\lambda_{ijk}\bigr)^{\alpha-1}}{e^{x/\sum_{k=1}^n\lambda_{ijk}}-1}
\ge3\sum_{k=1}^n\sum_{j=1}^m\sum_{i=1}^\ell\frac{\lambda_{ijk}^{\alpha-1}}{e^{x/\lambda_{ijk}}-1}
\end{multline}
and
\begin{multline}\label{1:2Sum-Ineq}
\sum_{k=1}^n\frac{\bigl(\sum_{j=1}^m\sum_{i=1}^\ell\lambda_{ijk}\bigr)^{\alpha-1}}{e^{x/\sum_{j=1}^m\sum_{i=1}^\ell\lambda_{ijk}}-1}
+\sum_{i=1}^\ell\frac{\bigl(\sum_{k=1}^n\sum_{j=1}^m\lambda_{ijk}\bigr)^{\alpha-1}}{e^{x/\sum_{k=1}^n\sum_{j=1}^m\lambda_{ijk}}-1}\\
+\sum_{j=1}^{m} \frac{\bigl(\sum_{i=1}^\ell\sum_{k=1}^n\lambda_{ijk}\bigr)^{\alpha-1}}{e^{x/\sum_{i=1}^\ell\sum_{k=1}^n\lambda_{ijk}}-1}
\ge3\sum_{k=1}^n\sum_{j=1}^m\sum_{i=1}^\ell\frac{\lambda_{ijk}^{\alpha-1}}{e^{x/\lambda_{ijk}}-1}.
\end{multline}
\end{theorem}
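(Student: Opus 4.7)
The plan is to reduce both inequalities to iterated applications of the super-additivity of $\mathfrak{H}_\alpha$, exactly as in the proof of Theorem~\ref{alpha>=1-thm}. By Theorem~\ref{t-alpha-BGF-convex-thm}(1) together with the limit~\eqref{frak-H-limit=020}, for $\alpha\ge1$ the function $\mathfrak{H}_\alpha$ extends continuously to $[0,\infty)$ by setting $\mathfrak{H}_\alpha(0)=0$ and is convex there with $\mathfrak{H}_\alpha(0)\le0$. Lemma~\ref{MOA-B.9-650-651} then yields that $\mathfrak{H}_\alpha$ is star-shaped and hence super-additive on $[0,\infty)$. Iterating the super-additive inequality shows that $\mathfrak{H}_\alpha\bigl(\sum_r s_r\bigr)\ge\sum_r\mathfrak{H}_\alpha(s_r)$ for any finite collection of non-negative arguments, and this is the only analytic input needed.

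For inequality~\eqref{2:1Sum-Ineq}, the plan is to freeze two of the three indices at a time. First, for each fixed pair $(j,k)$, super-additivity gives
$$
\mathfrak{H}_\alpha\!\left(\frac{\sum_{i=1}^{\ell}\lambda_{ijk}}{x}\right)\ge\sum_{i=1}^{\ell}\mathfrak{H}_\alpha\!\left(\frac{\lambda_{ijk}}{x}\right);
$$
analogous inequalities hold when the summation runs over $j$ (with $(i,k)$ fixed) and over $k$ (with $(i,j)$ fixed). Summing each of these three inequalities over the remaining two free indices and then adding the three resulting inequalities produces a left-hand side that matches, after multiplying by the overall factor $x^{1-\alpha}$, the left-hand side of~\eqref{2:1Sum-Ineq}, while the right-hand side is $3\sum_{i,j,k}\mathfrak{H}_\alpha(\lambda_{ijk}/x)$, which after the same scaling matches the right-hand side of~\eqref{2:1Sum-Ineq}.

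For inequality~\eqref{1:2Sum-Ineq}, I would freeze only one index at a time and apply super-additivity to a double sum. For each fixed $k$,
$$
\mathfrak{H}_\alpha\!\left(\frac{\sum_{j=1}^{m}\sum_{i=1}^{\ell}\lambda_{ijk}}{x}\right)\ge\sum_{j=1}^{m}\sum_{i=1}^{\ell}\mathfrak{H}_\alpha\!\left(\frac{\lambda_{ijk}}{x}\right),
$$
which is again iterated super-additivity. Summing over $k$ and performing the two symmetric manipulations (fixing $i$ and summing over $j,k$; fixing $j$ and summing over $i,k$), then adding the three, gives exactly~\eqref{1:2Sum-Ineq} after the same multiplication by $x^{1-\alpha}$ used to translate between $\mathfrak{H}_\alpha(t/x)$ and $\frac{t^{\alpha-1}}{e^{x/t}-1}$.

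There is no substantive obstacle: the mechanism is identical to Theorem~\ref{alpha>=1-thm} and the only care needed is bookkeeping to match the three symmetric summation patterns on the left-hand sides. The factor of $3$ on the right-hand sides arises, as the factor of $2$ did before, because each term $\mathfrak{H}_\alpha(\lambda_{ijk}/x)$ appears once on the right in each of the three super-additive inequalities being summed.
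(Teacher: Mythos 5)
Your proposal is correct and follows essentially the same route as the paper: both proofs reduce the two inequalities to iterated super-additivity of $\mathfrak{H}_\alpha$ on $[0,\infty)$ (obtained from convexity, $\mathfrak{H}_\alpha(0)=0$, and Lemma~\ref{MOA-B.9-650-651}), applied once over a single index for~\eqref{2:1Sum-Ineq} and over a double sum for~\eqref{1:2Sum-Ineq}, then summed over the remaining indices and added three ways to produce the factor $3$. The only quibble is the sign of the exponent in your rescaling (one passes from $\mathfrak{H}_\alpha(t/x)$ to $t^{\alpha-1}/(e^{x/t}-1)$ by multiplying by $x^{\alpha-1}$), which is immaterial since it is a positive constant.
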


\begin{proof}
As did in the proof of Theorem~\ref{alpha>=1-thm}, we can obtain
\begin{align*}
\mathfrak{H}_\alpha\Biggl(\frac{\sum_{i=1}^\ell\lambda_{ijk}}{x}\Biggr)
&\ge\sum_{i=1}^\ell\mathfrak{H}_\alpha\biggl(\frac{\lambda_{ijk}}{x}\biggr), \\
\mathfrak{H}_\alpha\Biggl(\frac{\sum_{j=1}^m\lambda_{ijk}}{x}\Biggr)
&\ge\sum_{j=1}^m\mathfrak{H}_\alpha\biggl(\frac{\lambda_{ijk}}{x}\biggr),\\
\mathfrak{H}_\alpha\Biggl(\frac{\sum_{k=1}^n\lambda_{ijk}}{x}\Biggr)
&\ge\sum_{k=1}^n\mathfrak{H}_\alpha\biggl(\frac{\lambda_{ijk}}{x}\biggr),\\
\mathfrak{H}_\alpha\Biggl(\frac{\sum_{j=1}^m\sum_{i=1}^\ell\lambda_{ijk}}{x}\Biggr)
&\ge\sum_{j=1}^m\sum_{i=1}^\ell\mathfrak{H}_\alpha\biggl(\frac{\lambda_{ijk}}{x}\biggr),\\
\mathfrak{H}_\alpha\Biggl(\frac{\sum_{k=1}^n\sum_{j=1}^m\lambda_{ijk}}{x}\Biggr)
&\ge\sum_{k=1}^n\sum_{j=1}^m\mathfrak{H}_\alpha\biggl(\frac{\lambda_{ijk}}{x}\biggr),\\
\mathfrak{H}_\alpha\Biggl(\frac{\sum_{i=1}^\ell\sum_{k=1}^n\lambda_{ijk}}{x}\Biggr)
&\ge\sum_{i=1}^\ell\sum_{k=1}^n\mathfrak{H}_\alpha\biggl(\frac{\lambda_{ijk}}{x}\biggr),\\
\sum_{k=1}^n\sum_{j=1}^m\mathfrak{H}_\alpha\Biggl(\frac{\sum_{i=1}^\ell\lambda_{ijk}}{x}\Biggr)
&\ge\sum_{k=1}^n\sum_{j=1}^m\sum_{i=1}^\ell\mathfrak{H}_\alpha\biggl(\frac{\lambda_{ijk}}{x}\biggr), \\
\sum_{i=1}^\ell\sum_{k=1}^n\mathfrak{H}_\alpha\Biggl(\frac{\sum_{j=1}^m\lambda_{ijk}}{x}\Biggr)
&\ge\sum_{i=1}^\ell\sum_{k=1}^n\sum_{j=1}^m\mathfrak{H}_\alpha\biggl(\frac{\lambda_{ijk}}{x}\biggr),\\
\sum_{j=1}^m\sum_{i=1}^\ell\mathfrak{H}_\alpha\Biggl(\frac{\sum_{k=1}^n\lambda_{ijk}}{x}\Biggr)
&\ge\sum_{j=1}^m\sum_{i=1}^\ell\sum_{k=1}^n\mathfrak{H}_\alpha\biggl(\frac{\lambda_{ijk}}{x}\biggr),\\
\sum_{k=1}^n\mathfrak{H}_\alpha\Biggl(\frac{\sum_{j=1}^m\sum_{i=1}^\ell\lambda_{ijk}}{x}\Biggr)
&\ge\sum_{k=1}^n\sum_{j=1}^m\sum_{i=1}^\ell\mathfrak{H}_\alpha\biggl(\frac{\lambda_{ijk}}{x}\biggr),\\
\sum_{i=1}^\ell\mathfrak{H}_\alpha\Biggl(\frac{\sum_{k=1}^n\sum_{j=1}^m\lambda_{ijk}}{x}\Biggr)
&\ge\sum_{i=1}^\ell\sum_{k=1}^n\sum_{j=1}^m\mathfrak{H}_\alpha\biggl(\frac{\lambda_{ijk}}{x}\biggr),\\
\sum_{j=1}^m\mathfrak{H}_\alpha\Biggl(\frac{\sum_{i=1}^\ell\sum_{k=1}^n\lambda_{ijk}}{x}\Biggr)
&\ge\sum_{j=1}^m\sum_{i=1}^\ell\sum_{k=1}^n\mathfrak{H}_\alpha\biggl(\frac{\lambda_{ijk}}{x}\biggr).
\end{align*}
Consequently, it follows that
\begin{multline*}
\sum_{k=1}^n\sum_{j=1}^m\mathfrak{H}_\alpha\Biggl(\frac{\sum_{i=1}^\ell\lambda_{ijk}}{x}\Biggr)
+\sum_{i=1}^\ell\sum_{k=1}^n\mathfrak{H}_\alpha\Biggl(\frac{\sum_{j=1}^m\lambda_{ijk}}{x}\Biggr)\\
+\sum_{j=1}^m\sum_{i=1}^\ell\mathfrak{H}_\alpha\Biggl(\frac{\sum_{k=1}^n\lambda_{ijk}}{x}\Biggr)
\ge\sum_{k=1}^n\sum_{j=1}^m\sum_{i=1}^\ell\mathfrak{H}_\alpha\biggl(\frac{\lambda_{ijk}}{x}\biggr)\\
+\sum_{i=1}^\ell\sum_{k=1}^n\sum_{j=1}^m\mathfrak{H}_\alpha\biggl(\frac{\lambda_{ijk}}{x}\biggr)
+\sum_{j=1}^m\sum_{i=1}^\ell\sum_{k=1}^n\mathfrak{H}_\alpha\biggl(\frac{\lambda_{ijk}}{x}\biggr)
\end{multline*}
and
\begin{multline*}
\sum_{k=1}^n\mathfrak{H}_\alpha\Biggl(\frac{\sum_{j=1}^m\sum_{i=1}^\ell\lambda_{ijk}}{x}\Biggr)
+\sum_{i=1}^\ell\mathfrak{H}_\alpha\Biggl(\frac{\sum_{k=1}^n\sum_{j=1}^m\lambda_{ijk}}{x}\Biggr)\\
+\sum_{j=1}^m\mathfrak{H}_\alpha\Biggl(\frac{\sum_{i=1}^\ell\sum_{k=1}^n\lambda_{ijk}}{x}\Biggr)
\ge\sum_{k=1}^n\sum_{j=1}^m\sum_{i=1}^\ell\mathfrak{H}_\alpha\biggl(\frac{\lambda_{ijk}}{x}\biggr)\\
+\sum_{i=1}^\ell\sum_{k=1}^n\sum_{j=1}^m\mathfrak{H}_\alpha\biggl(\frac{\lambda_{ijk}}{x}\biggr)
+\sum_{j=1}^m\sum_{i=1}^\ell\sum_{k=1}^n\mathfrak{H}_\alpha\biggl(\frac{\lambda_{ijk}}{x}\biggr).
\end{multline*}
Rearranging and simplifying the above two inequalities lead to~\eqref{2:1Sum-Ineq} and~\eqref{1:2Sum-Ineq}.
The proof of Theorem~\ref{alpha>=1-3-thm} is complete.
\end{proof}

\section{A new ratio of many gamma functions and its properties}

\subsection{Preliminaries}
It is common knowledge (\cite[Chapter~6]{abram} and \cite[Chapter~5]{NIST-HB-2010}) that the classical gamma function $\Gamma(z)$ can be defined (\cite{singularity-combined.tex} and~\cite[Chapter~3]{Temme-96-book}) by
\begin{equation*}
\Gamma(z)= \int_{0}^{\infty}t^{z-1}e^{-t} \td t, \quad \Re(z)>0
\end{equation*}
or by
\begin{equation*}
\Gamma(z)=\lim_{n\to\infty}\frac{n!n^z}{\prod_{k=0}^n(z+k)}, \quad z\in\mathbb{C}\setminus\{0,-1,-2,\dotsc\}.
\end{equation*}
Its logarithmic derivative $\psi(z)=[\ln\Gamma(x)]'=\frac{\Gamma'(z)}{\Gamma(z)}$ and $\psi^{(k)}(z)$ for $k\in\mathbb{N}$ are called in sequence digamma function, trigamma function, tetragamma function, and, totally, polygamma functions.
\par
The $q$-gamma function $\Gamma_q(x)$ for $q>0$ and $x>0$, the $q$-analogue of the gamma function $\Gamma(x)$, can be defined (\cite[pp.~493\nobreakdash--496]{aar} and~\cite[Section~1.10]{Basic-hypergeometric-series-2nd}) by
\begin{equation*}
\Gamma_q(x)=
\begin{cases}
(1-q)^{1-x}\prod\limits_{i=0}^\infty\dfrac{1-q^{i+1}}{1-q^{i+x}},& 0<q<1;\\
(q-1)^{1-x}q^{\binom{x}2}\prod\limits_{i=0}^\infty\dfrac{1-q^{-(i+1)}}{1-q^{-(i+x)}},& q>1;\\
\Gamma(x),& q=1.
\end{cases}
\end{equation*}
\par
A real-valued function $F(x)$ defined on a finite or infinite interval $I\subseteq\mathbb{R}$ is said to be completely monotonic on $I$ if and only if $(-1)^kF^{(k)}(x)\ge0$ for all $k\in\{0\}\cup\mathbb{N}$ and $x\in I$. See~\cite[Chapter~XIII]{mpf-1993}, \cite[Chapter~1]{Schilling-Song-Vondracek-2nd}, and~\cite[Chapter~IV]{widder}. A positive function $F(x)$ defined on a finite or infinite interval $I\subseteq\mathbb{R}$ is said to be logarithmically completely monotonic on $I$ if and only if $(-1)^k[\ln F(x)]^{(k)}\ge0$ for all $k\in\mathbb{N}$ and $x\in I$. See~\cite{Atanassov, CBerg, absolute-mon-simp.tex, compmon2, minus-one, e-gam-rat-comp-mon, JAAC384.tex} and~\cite[pp.~66--68, Comments~5.29]{Schilling-Song-Vondracek-2nd}. A nonnegative function $F(x)$ defined on a finite or infinity interval $I$ is called a Bernstein function if its derivative $f'(x)$ is completely monotonic on $I$. See the monograph~\cite{Schilling-Song-Vondracek-2nd}. Among these three concepts, there are the following relations:
\begin{enumerate}
\item
A logarithmically completely monotonic function is completely monotonic, but not conversely. See~\cite{CBerg, absolute-mon-simp.tex, compmon2, minus-one} and~\cite[Theorem~5.11]{Schilling-Song-Vondracek-2nd}.
\item
A completely monotonic function on $(0,\infty)$ or $[0,\infty)$ is equivalent to a Laplace transform. See~\cite[pp.~160--162, Theorems~12a, 12b, and~12c]{widder}.
\item
The reciprocal of a Bernstein function must be logarithmically completely monotonic, but not conversely. See~\cite[pp.~161\nobreakdash--162, Theorem~3]{Chen-Qi-Srivastava-09.tex} and~\cite[p.~64, Proposition~5.25]{Schilling-Song-Vondracek-2nd}.
\end{enumerate}

\subsection{History and backgrounds}
Let $p\in(0,1)$ and $k,n$ be nonnegative integers such that $0\le k\le n$. In~\cite[Theorem]{Alzer-JMAA-2018}, motivated by inequalities related to binomial probability studied in~\cite{Manitoba-Rep-2006, Leblanc-Johnson-JIPAM-2007}, with the help of inequality~\eqref{Alzer-jmaa-2018Lem1}, Alzer proved~\cite{Alzer-JMAA-2018} that the function
\begin{equation}\label{Alzer-G(a)-Eq}
G(x)=\frac{\Gamma(nx+1)}{\Gamma(kx+1)\Gamma((n-k)x+1)} p^{kx}(1-p)^{(n-k)x}
\end{equation}
is completely monotonic on $(0,\infty)$. Indeed, Alzer implicitly proved logarithmically complete monotonicity of $G(x)$ on $(0,\infty)$.
\par
In~\cite[Theorem~2.1]{Ouimet-JMAA-2018} and~\cite{Alzer-CM-JMAA.tex}, with the aid of inequalities~\eqref{Ouimet-Ineq-exp-2018} and~\eqref{Qi-Exp-Ineq-sum-O}, the function $G(x)$ defined in~\eqref{Alzer-G(a)-Eq} and its logarithmically complete monotonicity were generalized as follows.
Let $m\in\mathbb{N}$, $\lambda_i>0$ for $1\le i\le m$, $p_i\in(0,1)$ for $1\le i\le m$, and $\sum_{i=1}^{m}p_i=1$. Then the function
\begin{equation}\label{Qi-calQ(a)-Eq}
\mathcal{Q}(x)=\frac{\Gamma\bigl(1+x\sum_{i=1}^m\lambda_i\bigr)} {\prod_{i=1}^m\Gamma(1+x\lambda_i)}\prod_{i=1}^{m}p_i^{x\lambda_i}
\end{equation}
is logarithmically completely monotonic on $(0,\infty)$. By the way, the conditions in~\cite[Theorem~2.1]{Ouimet-JMAA-2018} are stronger and the conclusion in~\cite[Theorem~2.1]{Ouimet-JMAA-2018} is weaker than corresponding ones in~\cite[Theorem~2.2]{Alzer-CM-JMAA.tex}.
\par
With the help of inequality~\eqref{Qi-Exp-Ineq-sum-3}, the function
\begin{equation*}
\mathcal{Q}_q(x)=\frac{\Gamma_q\bigl(1+x\sum_{i=1}^m\lambda_i\bigr)} {\prod_{i=1}^m\Gamma_q(1+x\lambda_i)}\prod_{i=1}^{m}p_i^{x\lambda_i}
\end{equation*}
for $q\in(0,1)$ and $m\in\mathbb{N}$,
which is the $q$-analogue of the function $\mathcal{Q}(x)$ in~\eqref{Qi-calQ(a)-Eq}, was proved in~\cite{Q-Alzer-CM-Q.tex} to be logarithmically completely monotonic on $(0,\infty)$, where $\lambda_i>0$ for $1\le i\le m$ and $p_i\in(0,1)$ for $1\le i\le m$ with $\sum_{i=1}^{m}p_i=1$.
\par
By virtue of inequality~\eqref{Ouimet-lem-ineq}, Ouimet considered in~\cite[Theorem~2.1]{OUIMET-ARXIV-1907-05262} the function
\begin{equation}\label{g(t)-Ouimet}
g(t)=\frac{\prod_{i=1}^m\Gamma(\nu_it+1)\prod_{j=1}^n\Gamma\bigl(\tau_jt+1\bigr)} {\prod_{i=1}^m\prod_{j=1}^n\Gamma\bigl(\lambda_{ij}t+1\bigr)}
\end{equation}
and its logarithmically complete monotonicity on $(0,\infty)$. But, the conclusion and its proof in~\cite[Theorem~2.1]{OUIMET-ARXIV-1907-05262} are both wrong.
\par
Let $\lambda_{ij}>0$ for $1\le i\le m$ and $1\le j\le n$, let $\nu_i=\sum_{j=1}^n\lambda_{ij}$ and $\tau_j=\sum_{i=1}^m\lambda_{ij}$ for $1\le i\le m$ and $1\le j\le n$, and let
\begin{equation}\label{f(m-n-lambda-alpha-beta-rho)(t)}
f(t) =\frac{\prod_{i=1}^m\Gamma(1+\nu_it)\prod_{j=1}^n\Gamma\bigl(1+\tau_jt\bigr)} {\bigl[\prod_{i=1}^m\prod_{j=1}^n\Gamma\bigl(1+\lambda_{ij}t\bigr)\bigr]^\rho}
\end{equation}
for $\rho\in\mathbb{R}$. This function $f(t)$ is a generalization of the function $g(t)$ in~\eqref{g(t)-Ouimet}. In~\cite[Theorem~4.1]{Ouimet-LCM-BKMS.tex}, with the help of inequality~\eqref{Ouimet-lem-iq}, the following conclusions were obtained:
\begin{enumerate}
\item
when $\rho\le2$, the second derivative $[\ln f(t)]''$ is a completely monotonic function of $t\in(0,\infty)$ and maps from $(0,\infty)$ onto the open interval
$$
\Biggl(0,\frac{\pi^2}{6}\Biggl(\sum_{i=1}^m\nu_i^2 +\sum_{j=1}^n\tau_j^2 -\rho\sum_{i=1}^m\sum_{j=1}^n\lambda_{ij}^2\Biggr)\Biggr);
$$
\item
when $\rho=2$, the logarithmic derivative $[\ln f(t)]' =\frac{f'(t)} {f(t)}$ is a Bernstein function of $t\in(0,\infty)$ and maps from $(0,\infty)$ onto the open interval
\begin{equation*}
\left(0,\ln\frac{\prod_{i=1}^m\nu_i^{\nu_i}\prod_{j=1}^n\tau_j^{\tau_j}} {\Bigl(\prod_{i=1}^m\prod_{j=1}^n\lambda_{ij}^{\lambda_{ij}}\Bigr)^2}\right).
\end{equation*}
\item
when $\rho<2$, the logarithmic derivative $[\ln f(t)]'$ is increasing and concave and maps from $(0,\infty)$ onto the open interval
$$
\Biggl(-\gamma(2-\rho)\sum_{i=1}^m\sum_{j=1}^n\lambda_{ij},\infty\Biggr),
$$
where $\gamma=0.57721566\dotsc$ is the Euler--Mascheroni constant;
\item
when $\rho=2$, the function $f(t)$ is increasing and logarithmically convex and maps from $(0,\infty)$ onto the open interval $(1,\infty)$;
\item
when $\rho<2$, the function $f(t)$ has a unique minimum, is logarithmically convex, and satisfies
\begin{equation*}
\lim_{t\to0^+}f(t)=1 \quad\text{and}\quad \lim_{t\to\infty}f(t)=\infty.
\end{equation*}
\end{enumerate}
\par
Some of the above results have been applied in~\cite{Alzer-JMAA-2018, Ouimet-JMAA-2018, Q-Alzer-CM-Q.tex, Ouimet-LCM-BKMS.tex} to multinomial probability, to the Bernstein estimators on the simplex, to constructing combinatorial inequalities for multinomial coefficients, to constructing inequalities for multivariate beta functions, and the like.

\subsection{Complete monotonicity of a linear combination of finite trigamma functions}

In the paper~\cite{n-psi-sum-CM.tex}, the authors discussed complete monotonicity of the linear combination $\sum_{k=1}^{m}a_k\psi(b_kx+\delta)$ for $\delta\ge0$ and $a_k,b_k>0$.
Now we discuss complete monotonicity of a linear combination of finite trigamma functions.

\begin{theorem}\label{linear-comb-CM-thm}
Let $\lambda_{ij}>0$ for $1\le i\le m$ and $1\le j\le n$, let $\nu_i=\sum_{j=1}^n\lambda_{ij}$ and $\tau_j=\sum_{i=1}^m\lambda_{ij}$ for $1\le i\le m$ and $1\le j\le n$, and let $\rho,\theta\in\mathbb{R}$. If $\rho\le2$ and $\theta\ge0$, then the linear combination
\begin{equation}\label{linear-comb-CM-F}
P(t)=\sum_{i=1}^m\nu_i^{\theta+2}\psi'(1+\nu_it) +\sum_{j=1}^n\tau_j^{\theta+2}\psi'\bigl(1+\tau_jt\bigr) -\rho\sum_{i=1}^m\sum_{j=1}^n\lambda_{ij}^{\theta+2}\psi'\bigl(1+\lambda_{ij}t\bigr)
\end{equation}
is completely monotonic on $(0,\infty)$.
\end{theorem}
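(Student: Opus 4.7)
The plan is to represent $P(t)$ as a Laplace transform of a nonnegative measure, and then invoke Bernstein's theorem (cf.\ Widder's monograph cited as~\cite{widder}) to conclude complete monotonicity.

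First I would start from the standard integral representation
\begin{equation*}
\psi'(1+x)=\int_0^\infty \frac{s\,e^{-xs}}{e^s-1}\,\td s, \qquad x>-1,
\end{equation*}
which follows from differentiating the Binet-type formula for $\psi$ (equivalently from $\psi'(x)=\sum_{k\ge0}(x+k)^{-2}$ and interchanging sum and integral). For each $\lambda>0$ I would then perform the change of variable $u=\lambda s$ in the representation of $\psi'(1+\lambda t)$ to obtain
\begin{equation*}
\lambda^{\theta+2}\psi'(1+\lambda t)=\int_0^\infty u\,e^{-tu}\,\frac{\lambda^{\theta}}{e^{u/\lambda}-1}\,\td u.
\end{equation*}
Applying this to each of the three sums in~\eqref{linear-comb-CM-F} and interchanging the finite sums with the integral, I would arrive at the clean representation
\begin{equation*}
P(t)=\int_0^\infty u\,e^{-tu}\,\Phi(u)\,\td u,
\end{equation*}
where
\begin{equation*}
\Phi(u)=\sum_{i=1}^m\frac{\nu_i^{\theta}}{e^{u/\nu_i}-1}+\sum_{j=1}^n\frac{\tau_j^{\theta}}{e^{u/\tau_j}-1}-\rho\sum_{i=1}^m\sum_{j=1}^n\frac{\lambda_{ij}^{\theta}}{e^{u/\lambda_{ij}}-1}.
\end{equation*}

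The crux is now to show $\Phi(u)\ge 0$ on $(0,\infty)$. This is precisely where the hypotheses $\theta\ge 0$ and $\rho\le 2$ enter: by Theorem~\ref{alpha>=1-thm} applied with $\alpha=\theta+1\ge 1$, inequality~\eqref{alpha>=1-inequal} gives
\begin{equation*}
\sum_{i=1}^m\frac{\nu_i^{\theta}}{e^{u/\nu_i}-1}+\sum_{j=1}^n\frac{\tau_j^{\theta}}{e^{u/\tau_j}-1}\ge 2\sum_{i=1}^m\sum_{j=1}^n\frac{\lambda_{ij}^{\theta}}{e^{u/\lambda_{ij}}-1}
\end{equation*}
for every $u>0$. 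Since the double sum on the right is manifestly positive and $\rho\le 2$, replacing the factor $2$ by $\rho$ only enlarges the left-hand side minus right-hand side, which means $\Phi(u)\ge 0$ on $(0,\infty)$.

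Having established $\Phi\ge 0$, the integrand $u\,e^{-tu}\Phi(u)$ is a nonnegative kernel, so $P(t)$ is expressed as a Laplace transform of a nonnegative measure on $(0,\infty)$. By the Bernstein--Hausdorff--Widder characterization, such a Laplace transform is completely monotonic on $(0,\infty)$, which finishes the proof. The only delicate point is the justification of the interchange of summation and integration and the convergence of the integral at $u=0$ and $u=\infty$; both are routine since each individual integrand $u\cdot e^{-tu}\lambda^\theta/(e^{u/\lambda}-1)$ behaves like $\lambda^{\theta+1}e^{-tu}$ as $u\to 0^+$ and decays exponentially as $u\to\infty$ for any fixed $t>0$. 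I do not anticipate any serious obstacle beyond these bookkeeping verifications, as the heavy lifting has already been done in Theorem~\ref{alpha>=1-thm}.
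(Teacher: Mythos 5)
Your proposal is correct and follows essentially the same route as the paper: the same integral representation of $\psi'$, the same change of variable leading to $P(t)=\int_0^\infty u e^{-tu}\Phi(u)\,\td u$, the same appeal to inequality~\eqref{alpha>=1-inequal} with $\alpha=\theta+1\ge1$ to get $\Phi\ge0$, and the same conclusion via the Laplace-transform characterization of complete monotonicity. No gaps.
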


\begin{proof}
Employing the integral representation
\begin{equation*}
\psi^{(n)}(z)=(-1)^{n+1}\int_0^\infty\frac{s^n}{1-e^{-s}}e^{-zs}\td s, \quad \Re(z)>0
\end{equation*}
in~\cite[p.~260, 6.4.1]{abram} leads to
\begin{gather*}
P(t)=\sum_{i=1}^m\nu_i^{\theta+2}\int_0^\infty\frac{s}{1-e^{-s}}e^{-(1+\nu_it)s}\td s
+\sum_{j=1}^n\tau_j^{\theta+2}\int_0^\infty\frac{s}{1-e^{-s}}e^{-(1+\tau_jt)s}\td s\\
-\rho\sum_{i=1}^m\sum_{j=1}^n\lambda_{ij}^{\theta+2}\int_0^\infty\frac{s}{1-e^{-s}}e^{-(1+\lambda_{ij}t)s}\td s\\
=\sum_{i=1}^m\nu_i^{\theta+2}\int_0^\infty\frac{s}{e^s-1}e^{-\nu_its}\td s
+\sum_{j=1}^n\tau_j^{\theta+2}\int_0^\infty\frac{s}{e^s-1}e^{-\tau_jts}\td s\\
-\rho\sum_{i=1}^m\sum_{j=1}^n\lambda_{ij}^{\theta+2}\int_0^\infty\frac{s}{e^s-1}e^{-\lambda_{ij}ts}\td s\\
=\sum_{i=1}^m\nu_i^{\theta}\int_0^\infty\frac{u}{e^{u/\nu_i}-1}e^{-tu}\td u
+\sum_{j=1}^n\tau_j^{\theta}\int_0^\infty\frac{u}{e^{u/\tau_j}-1}e^{-tu}\td u\\
-\rho\sum_{i=1}^m\sum_{j=1}^n\lambda_{ij}^{\theta}\int_0^\infty\frac{u}{e^{u/\lambda_{ij}}-1}e^{-tu}\td u\\
=\int_0^\infty\Biggl(\sum_{i=1}^m\frac{\nu_i^{\theta}}{e^{u/\nu_i}-1}
+\sum_{j=1}^n\frac{\tau_j^{\theta}}{e^{u/\tau_j}-1} -\rho\sum_{i=1}^m\sum_{j=1}^{n} \frac{\lambda_{ij}^{\theta}}{e^{u/\lambda_{ij}}-1}\Biggr)ue^{-tu}\td u.
\end{gather*}
Using inequality~\eqref{alpha>=1-inequal} in Theorem~\ref{alpha>=1-thm} shows that, if $\theta\ge0$ and $\rho\le2$, the function $P(t)$ is a Laplace transform of a positive function
$$
\Biggl(\sum_{i=1}^m\frac{\nu_i^{\theta}}{e^{u/\nu_i}-1}
+\sum_{j=1}^n\frac{\tau_j^{\theta}}{e^{u/\tau_j}-1} -\rho\sum_{i=1}^m\sum_{j=1}^{n} \frac{\lambda_{ij}^{\theta}}{e^{u/\lambda_{ij}}-1}\Biggr)u.
$$
Consequently, if $\theta\ge0$ and $\rho\le2$, the function $P(t)$ is completely monotonic on $(0,\infty)$.
The proof of Theorem~\ref{linear-comb-CM-thm} is complete.
\end{proof}

\subsection{A new ratio of many gamma functions and its properties}
Let $\lambda_{ij}>0$ for $1\le i\le m$ and $1\le j\le n$, let $\nu_i=\sum_{j=1}^n\lambda_{ij}$ and $\tau_j=\sum_{i=1}^m\lambda_{ij}$ for $1\le i\le m$ and $1\le j\le n$, and let
\begin{equation}\label{f(m-n-lambda-alpha-beta-rho-theta)(t)}
F(t)=\frac{\prod_{i=1}^{m}[\Gamma(1+\nu_it)]^{\nu_i^\theta}
\prod_{j=1}^n\bigl[\Gamma\bigl(1+\tau_jt\bigr)\bigr]^{\tau_j^\theta}} {\prod_{i=1}^m\prod_{j=1}^{n}\bigl[\Gamma\bigl(1+\lambda_{ij}t\bigr)\bigr]^{\rho\lambda_{ij}^\theta}}
\end{equation}
for $\rho,\theta\in\mathbb{R}$.
It is clear that, when $\theta=0$, the function $F(t)$ becomes $f(t)$ defined in~\eqref{f(m-n-lambda-alpha-beta-rho)(t)}.

\begin{theorem}\label{theta-rho-ratio-thm}
The function $F(t)$ has the following properties:
\begin{enumerate}
\item
If $\rho\le2$ and $\theta\ge0$, the second derivative $[\ln F(t)]''$ is completely monotonic and maps from $(0,\infty)$ onto the interval
\begin{equation*}
\Biggl(0,\frac{\pi^2}{6}\Biggl(\sum_{i=1}^m\nu_i^{\theta+2}+\sum_{j=1}^n\tau_j^{\theta+2} -\rho\sum_{i=1}^m\sum_{j=1}^n\lambda_{ij}^{\theta+2}\Biggr)\Biggr).
\end{equation*}
\item
If $\rho\le2$ and $\theta\ge0$, the logarithmic derivative $[\ln F(t)]'=\frac{F'(t)}{F(t)}$ is increasing and concave on $(0,\infty)$.
\begin{enumerate}
\item
If $\rho=2$ and $\theta=0$, the logarithmic derivative $[\ln F(t)]'=\frac{F'(t)}{F(t)}$ maps $(0,\infty)$ onto
\begin{equation*}
\left(0,\ln\frac{\prod_{i=1}^m\nu_i^{\nu_i}\prod_{j=1}^n\tau_j^{\tau_j}} {\Bigl(\prod_{i=1}^m\prod_{j=1}^n\lambda_{ij}^{\lambda_{ij}}\Bigr)^2}\right)
\end{equation*}
and, consequently, is a Bernstein function on $(0,\infty)$.
\item
If $\rho<2$ or $\theta>0$, the logarithmic derivative $[\ln F(t)]'=\frac{F'(t)}{F(t)}$ maps $(0,\infty)$ onto
\begin{equation*}
\Biggl(-\gamma\Biggl(\sum_{i=1}^m\nu_i^{\theta+1}+\sum_{j=1}^n\tau_j^{\theta+1} -\rho\sum_{i=1}^m\sum_{j=1}^n\lambda_{ij}^{\theta+1}\Biggr), \infty\Biggr),
\end{equation*}
where $\gamma=0.57721566\dotsc$ is the Euler--Mascheroni constant.
\end{enumerate}
\item
If $\rho\le2$ and $\theta\ge0$, the function $F(t)$ is logarithmically convex on $(0,\infty)$.
\begin{enumerate}
\item
If $\rho=2$ and $\theta=0$, the function $F(t)$ is increasing and maps $(0,\infty)$ onto $(1,\infty)$.
\item
If $\rho<2$ or $\theta>0$, the function $F(t)$ has a unique minimum and the limits
\begin{equation*}
\lim_{t\to0^+}F(t)=1 \quad\text{and}\quad \lim_{t\to\infty}F(t)=\infty.
\end{equation*}
\end{enumerate}
\end{enumerate}
\end{theorem}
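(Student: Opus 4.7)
The strategy is to reduce everything to Theorem~\ref{linear-comb-CM-thm}. Taking logarithms and differentiating $F(t)$ once gives
\begin{equation*}
[\ln F(t)]'=\sum_{i=1}^m\nu_i^{\theta+1}\psi(1+\nu_it)+\sum_{j=1}^n\tau_j^{\theta+1}\psi(1+\tau_jt)-\rho\sum_{i=1}^m\sum_{j=1}^n\lambda_{ij}^{\theta+1}\psi(1+\lambda_{ij}t),
\end{equation*}
and a second differentiation produces exactly the function $P(t)$ of Theorem~\ref{linear-comb-CM-thm}. That theorem immediately yields claim~(1): $[\ln F(t)]''$ is completely monotonic under $\rho\le2$ and $\theta\ge0$. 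To identify its range, I would observe that complete monotonicity forces strict monotone decrease, and use $\psi'(1)=\frac{\pi^2}{6}$ at $t\to0^+$ together with $\psi'(1+x)\to0$ as $x\to\infty$ to read off the two endpoints of the stated open interval.

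Claim~(2) then follows from classical considerations. The inequality $[\ln F(t)]''\ge0$ says that $[\ln F(t)]'$ is increasing, while complete monotonicity of $[\ln F(t)]''$ yields $[\ln F(t)]'''\le0$, i.e., $[\ln F(t)]'$ is concave. To pin down its range I would use $\psi(1)=-\gamma$ at $t\to0^+$ and the asymptotic $\psi(1+x)=\ln x+O(1/x)$ as $x\to\infty$. In the latter expansion the coefficient of $\ln t$ equals $\sum_i\nu_i^{\theta+1}+\sum_j\tau_j^{\theta+1}-\rho\sum_{i,j}\lambda_{ij}^{\theta+1}$. If $\rho=2$ and $\theta=0$ this coefficient collapses to zero, and after cancellation what remains is precisely $\ln\bigl(\prod_i\nu_i^{\nu_i}\prod_j\tau_j^{\tau_j}/\bigl(\prod_{i,j}\lambda_{ij}^{\lambda_{ij}}\bigr)^{2}\bigr)$. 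If instead $\rho<2$ or $\theta>0$, strict super-additivity (via Lemma~\ref{MOA-B.9-650-651}) of the map $u\mapsto u^{\theta+1}$ on $[0,\infty)$ forces the coefficient to be strictly positive, and the limit is $+\infty$. The Bernstein function assertion in case~(a) is then automatic, since $[\ln F(t)]'$ is nonnegative (increasing from $0$) and has a completely monotonic derivative.

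Claim~(3) is a consequence of~(2). Logarithmic convexity is literally $[\ln F(t)]''\ge0$. The boundary value $F(0^+)=1$ follows from $\Gamma(1)=1$. In case~(a) the nonnegativity of $[\ln F(t)]'$ gives monotonicity of $F$, and the positive finite limit of $[\ln F(t)]'$ at $\infty$ forces $\ln F(t)\to\infty$. In case~(b), since $[\ln F(t)]'$ is continuous, increasing, negative at $0^+$, and tending to $+\infty$, it has a unique zero, which produces the unique minimum of $F$; the divergence $F(t)\to\infty$ then follows from the $c\ln t$ growth of $[\ln F(s)]'$ with $c>0$.

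The main obstacle I anticipate is the asymptotic analysis of $[\ln F(t)]'$ as $t\to\infty$: one must verify the $\ln t$ cancellation in case~(a) carefully to extract the finite limit, and in case~(b) one must check that super-additivity of $u\mapsto u^{\theta+1}$ is strict whenever $\rho<2$ or $\theta>0$, so that the limit is unambiguously $+\infty$. Everything else reduces to routine differentiation together with an application of Theorem~\ref{linear-comb-CM-thm}.
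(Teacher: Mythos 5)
Your proposal is correct and follows essentially the same route as the paper: identify $[\ln F(t)]''$ with the function $P(t)$ of Theorem~\ref{linear-comb-CM-thm}, then read off the ranges from the values $\psi(1)=-\gamma$ and $\psi'(1)=\pi^2/6$ at $t\to0^+$ and the asymptotics $\psi(1+x)\sim\ln x$, $\psi'(1+x)\to0$ at infinity, with the coefficient $\sum_{i=1}^m\nu_i^{\theta+1}+\sum_{j=1}^n\tau_j^{\theta+1}-\rho\sum_{i=1}^m\sum_{j=1}^n\lambda_{ij}^{\theta+1}$ of $\ln t$ vanishing exactly when $\rho=2$ and $\theta=0$ (by super-additivity, as you note). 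The only divergence is that the paper establishes $\lim_{t\to\infty}F(t)=\infty$ by expanding $\ln F(t)$ via Binet's formula and verifying directly that $\prod_{i=1}^m\nu_i^{\nu_i}\prod_{j=1}^n\tau_j^{\tau_j}\big/\bigl(\prod_{i=1}^m\prod_{j=1}^n\lambda_{ij}^{\lambda_{ij}}\bigr)^2>1$, whereas you deduce the divergence from the behavior of the increasing logarithmic derivative; both are sound.
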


\begin{proof}
Taking the logarithm of $F(t)$ in~\eqref{f(m-n-lambda-alpha-beta-rho-theta)(t)} and differentiating give
\begin{gather*}
\ln F(t)=\sum_{i=1}^m\nu_i^\theta\ln\Gamma(1+\nu_it)+\sum_{j=1}^n\tau_j^\theta\ln\Gamma\bigl(1+\tau_jt\bigr) -\rho\sum_{i=1}^m\sum_{j=1}^n\lambda_{ij}^\theta\ln\Gamma\bigl(1+\lambda_{ij}t\bigr),\\
[\ln F(t)]'=\sum_{i=1}^m\nu_i^{\theta+1}\psi(1+\nu_it)+\sum_{j=1}^n\tau_j^{\theta+1}\psi\bigl(1+\tau_jt\bigr) -\rho\sum_{i=1}^m\sum_{j=1}^n\lambda_{ij}^{\theta+1}\psi\bigl(1+\lambda_{ij}t\bigr),
\end{gather*}
and $[\ln F(t)]''=P(t)$, where $P(t)$ is defined in~\eqref{linear-comb-CM-F}. From Theorem~\ref{linear-comb-CM-thm}, it follows immediately that, if $\rho\le2$ and $\theta\ge0$, the second derivative $[\ln F(t)]''$ is completely monotonic on $(0,\infty)$ and, consequently, that the function $F(t)$ is logarithmically convex on $(0,\infty)$.
\par
It is easy to obtain that
\begin{equation*}
\lim_{t\to0^+}F(t)=1, \quad \lim_{t\to0^+}[\ln F(t)]'=-\gamma\Biggl(\sum_{i=1}^m\nu_i^{\theta+1}+\sum_{j=1}^n\tau_j^{\theta+1} -\rho\sum_{i=1}^m\sum_{j=1}^n\lambda_{ij}^{\theta+1}\Biggr),
\end{equation*}
and
\begin{equation*}
\lim_{t\to0^+}[\ln F(t)]''=\lim_{t\to0^+}P(t)=\frac{\pi^2}{6}\Biggl(\sum_{i=1}^m\nu_i^{\theta+2}+\sum_{j=1}^n\tau_j^{\theta+2} -\rho\sum_{i=1}^m\sum_{j=1}^n\lambda_{ij}^{\theta+2}\Biggr).
\end{equation*}
\par
Since $P(t)=[\ln F(t)]''$ is completely monotonic on $(0,\infty)$, the logarithmic derivative $[\ln F(t)]'$ is increasing and concave on $(0,\infty)$. Utilizing $\lim_{t\to\infty}[\psi(t)-\ln t]=0$ in~\cite[Theorem~1]{theta-new-proof.tex-BKMS} and~\cite[Section~1.4]{Sharp-Ineq-Polygamma-Slovaca.tex} produces
\begin{align*}
\lim_{t\to\infty}[\ln F(t)]'&=\sum_{i=1}^m\nu_i^{\theta+1}\lim_{t\to\infty}[\psi(1+\nu_it)-\ln(1+\nu_it)]\\
&\quad+\sum_{j=1}^n\tau_j^{\theta+1}\lim_{t\to\infty}\bigl[\psi\bigl(1+\tau_jt\bigr)-\ln\bigl(1+\tau_jt\bigr)\bigr]\\
&\quad-\rho\sum_{i=1}^m\sum_{j=1}^n\lambda_{ij}^{\theta+1} \lim_{t\to\infty}\bigl[\psi\bigl(1+\lambda_{ij}t\bigr)-\ln\bigl(1+\lambda_{ij}t\bigr)\bigr]\\
&\quad+\ln\lim_{t\to\infty}\frac{\prod_{i=1}^{m}(1+\nu_it)^{\nu_i^{\theta+1}} \prod_{j=1}^n\bigl(1+\tau_jt\bigr)^{\tau_j^{\theta+1}}} {\prod_{i=1}^m\prod_{j=1}^n\bigl(1+\lambda_{ij}t\bigr)^{\rho\lambda_{ij}^{\theta+1}}}\\
&=\ln\lim_{t\to\infty}\frac{\prod_{i=1}^{m}(1/t+\nu_i)^{\nu_i^{\theta+1}} \prod_{j=1}^n\bigl(1/t+\tau_j\bigr)^{\tau_j^{\theta+1}}} {\prod_{i=1}^m\prod_{j=1}^n\bigl(1/t+\lambda_{ij}\bigr)^{\rho\lambda_{ij}^{\theta+1}}}\\
&\quad+\ln\lim_{t\to\infty}\frac{\prod_{i=1}^{m}t^{\nu_i^{\theta+1}} \prod_{j=1}^{n}t^{\tau_j^{\theta+1}}} {\prod_{i=1}^m\prod_{j=1}^{n}t^{\rho\lambda_{ij}^{\theta+1}}}\\
&=\ln\frac{\prod_{i=1}^m\nu_i^{\nu_i^{\theta+1}} \prod_{j=1}^n\tau_j^{\tau_j^{\theta+1}}} {\prod_{i=1}^m\prod_{j=1}^n\lambda_{ij}^{\rho\lambda_{ij}^{\theta+1}}}+\ln\lim_{t\to\infty}\frac{\prod_{i=1}^{m}t^{\nu_i^{\theta+1}} \prod_{j=1}^{n}t^{\tau_j^{\theta+1}}} {\prod_{i=1}^m\prod_{j=1}^{n}t^{\rho\lambda_{ij}^{\theta+1}}},
\end{align*}
where the last term is equal to
\begin{equation*}
\ln\lim_{t\to\infty}t^{\sum_{i=1}^m\nu_i^{\theta+1}+\sum_{j=1}^n\tau_j^{\theta+1} -\rho\sum_{i=1}^m\sum_{j=1}^n\lambda_{ij}^{\theta+1}}
=
\begin{dcases}
0, & \theta=0 \text{ and }\rho=2;\\
\infty, & \theta>0 \text{ or } \rho<2.
\end{dcases}
\end{equation*}
\par
By virtue of the formula
\begin{equation*}
\ln\Gamma(z+1)=\biggl(z+\frac12\biggr)\ln z-z+\frac12\ln(2\pi)+\int_{0}^{\infty}\vartheta(s)e^{-zs}\td s
\end{equation*}
in~\cite[p.~62, (3.20)]{Temme-96-book}, where
\begin{equation*}
\vartheta(s)=\frac1s\biggl(\frac1{e^s-1}-\frac1s+\frac12\biggr),
\end{equation*}
we can find
\begin{gather*}
\ln F(t)=\sum_{i=1}^m\nu_i^\theta\biggl[\biggl(\nu_it+\frac12\biggr)\ln(\nu_it) -\nu_it+\frac12\ln(2\pi)+\int_{0}^{\infty}\vartheta(s)e^{-\nu_its}\td s\biggr]\\
+\sum_{j=1}^n\tau_j^\theta\biggl[\biggl(\tau_jt+\frac12\biggr)\ln(\tau_jt)-\tau_jt +\frac12\ln(2\pi)+\int_{0}^{\infty}\vartheta(s)e^{-\tau_jts}\td s\biggr]\\ -\rho\sum_{i=1}^m\sum_{j=1}^n\lambda_{ij}^\theta\biggl[\biggl(\lambda_{ij}t+\frac12\biggr)\ln(\lambda_{ij}t) -\lambda_{ij}t+\frac12\ln(2\pi)+\int_{0}^{\infty}\vartheta(s)e^{-\lambda_{ij}ts}\td s\biggr]\\
=\int_{0}^{\infty}\vartheta(s)\Biggl[\sum_{i=1}^m\nu_i^\theta e^{-\nu_its} +\sum_{j=1}^n\tau_j^\theta e^{-\tau_jts} -\rho\sum_{i=1}^m\sum_{j=1}^n\lambda_{ij}^\theta e^{-\lambda_{ij}ts}\Biggr]\td s\\
+\frac12\ln\frac{\prod_{i=1}^m\nu_i^{\nu_i^\theta} \prod_{j=1}^n\tau_j^{\tau_j^\theta}} {\prod_{i=1}^m\prod_{j=1}^n\lambda_{ij}^{\rho\lambda_{ij}^\theta}}
+\Biggl[\sum_{i=1}^m\nu_i^\theta+\sum_{j=1}^n\tau_j^\theta
-\rho\sum_{i=1}^m\sum_{j=1}^n\lambda_{ij}^\theta\Biggr]\frac{\ln(2\pi t)}2\\
+\left[\ln\frac{\prod_{i=1}^m\nu_i^{\nu_i^{\theta+1}} \prod_{j=1}^n\tau_j^{\tau_j^{\theta+1}}} {\Bigl(\prod_{i=1}^m\prod_{j=1}^n\lambda_{ij}^{\lambda_{ij}^{\theta+1}}\Bigr)^\rho} -\Biggl(\sum_{i=1}^m\nu_i^{\theta+1}+\sum_{j=1}^n\tau_j^{\theta+1} -\rho\sum_{i=1}^m\sum_{j=1}^n\lambda_{ij}^{\theta+1}\Biggr)\right]t\\
+\Biggl[\sum_{i=1}^m\nu_i^{\theta+1}+\sum_{j=1}^n\tau_j^{\theta+1} -\rho\sum_{i=1}^m\sum_{j=1}^n\lambda_{ij}^{\theta+1}\Biggr]t\ln t
\to\infty, \quad t\to\infty
\end{gather*}
where, when $\rho=2$ and $\theta=0$, we used the fact~\cite{Ouimet-LCM-BKMS.tex} that
\begin{gather*}
\frac{\prod_{i=1}^m\nu_i^{\nu_i}\prod_{j=1}^n\tau_j^{\tau_j}} {\Bigl(\prod_{i=1}^m\prod_{j=1}^n\lambda_{ij}^{\lambda_{ij}}\Bigr)^2}
=\frac{\prod_{i=1}^m\nu_i^{\nu_i}}{\prod_{i=1}^m\prod_{j=1}^n\lambda_{ij}^{\lambda_{ij}}} \frac{\prod_{j=1}^n\tau_j^{\tau_j}}{\prod_{i=1}^m\prod_{j=1}^n\lambda_{ij}^{\lambda_{ij}}}\\
=\prod_{i=1}^m\frac{\nu_i^{\nu_i}}{\prod_{j=1}^n\lambda_{ij}^{\lambda_{ij}}} \prod_{j=1}^n\frac{\tau_j^{\tau_j}}{\prod_{i=1}^m\lambda_{ij}^{\lambda_{ij}}}
=\prod_{i=1}^m\frac{\prod_{j=1}^{n}\bigl(\sum_{\ell=1}^{n}\lambda_{i\ell}\bigr)^{\lambda_{ij}}} {\prod_{j=1}^n\lambda_{ij}^{\lambda_{ij}}}
\prod_{j=1}^n\frac{\prod_{i=1}^m\bigl(\sum_{\ell}^m\lambda_{\ell j}\bigr)^{\lambda_{j\ell}}} {\prod_{i=1}^m\lambda_{ij}^{\lambda_{ij}}}\\
=\prod_{i=1}^m\prod_{j=1}^n\Biggl(\frac{\sum_{\ell=1}^n\lambda_{i\ell}}{\lambda_{ij}}\Biggr)^{\lambda_{ij}} \prod_{j=1}^n\prod_{i=1}^m\Biggl(\frac{\sum_{\ell}^m\lambda_{\ell j}}{\lambda_{ij}}\Biggr)^{\lambda_{j\ell}}
>1\times1=1.
\end{gather*}
The proof of Theorem~\ref{theta-rho-ratio-thm} is complete.
\end{proof}

\section{Four functions to be investigated}

Finally, basing on inequalities~\eqref{2:1Sum-Ineq} and~\eqref{1:2Sum-Ineq} in Theorem~\ref{alpha>=1-3-thm}, motivated by Theorems~\ref{linear-comb-CM-thm} and~\ref{theta-rho-ratio-thm}, we would like to suggest to consider two linear combinations $L_1(t), L_2(t)$ and two ratios $R_1(t), R_2(t)$ defined by
\begin{gather*}
L_1(t)=\sum_{i=1}^\ell\sum_{j=1}^m \Biggl(\sum_{k=1}^n\lambda_{ijk}\Biggr)^\theta\psi'\Biggl(1+t\sum_{k=1}^n\lambda_{ijk}\Biggr)\\
+\sum_{j=1}^m\sum_{k=1}^n \Biggl(\sum_{i=1}^\ell\lambda_{ijk}\Biggr)^\theta \psi'\Biggl(1+t\sum_{i=1}^\ell\lambda_{ijk}\Biggr)\\
+\sum_{k=1}^n\sum_{i=1}^\ell \Biggl(\sum_{j=1}^m\lambda_{ijk}\Biggr)^\theta \psi'\Biggl(1+t\sum_{j=1}^m\lambda_{ijk}\Biggr)
-\rho\sum_{i=1}^\ell\sum_{j=1}^m\sum_{k=1}^{n}\lambda_{ijk}^\theta\psi'\bigl(1+\lambda_{ijk}t\bigr),\\
L_2(t)=\sum_{i=1}^\ell \Biggl(\sum_{j=1}^m\sum_{k=1}^n\lambda_{ijk}\Biggr)^\theta\psi'\Biggl(1+t\sum_{j=1}^m\sum_{k=1}^n\lambda_{ijk}\Biggr)\\
+\sum_{j=1}^m \Biggl(\sum_{k=1}^n\sum_{i=1}^\ell\lambda_{ijk}\Biggr)^\theta\psi'\Biggl(1+t\sum_{k=1}^n\sum_{i=1}^\ell\lambda_{ijk}\Biggr)\\
+\sum_{k=1}^n \Biggl(\sum_{i=1}^\ell\sum_{j=1}^m\lambda_{ijk}\Biggr)^\theta\psi'\Biggl(1+t\sum_{i=1}^\ell\sum_{j=1}^m\lambda_{ijk}\Biggr)
-\rho\sum_{i=1}^\ell\sum_{j=1}^m\sum_{k=1}^{n} \lambda_{ijk}^\theta\psi'\bigl(1+\lambda_{ijk}t\bigr),\\
R_1(t)=\frac{\left(\begin{gathered}\prod_{i=1}^\ell\prod_{j=1}^m \Biggl[\Gamma\Biggl(1+t\sum_{k=1}^n\lambda_{ijk}\Biggr)\Biggr]^{(\sum_{k=1}^n\lambda_{ijk})^\theta}\\
\times\prod_{j=1}^m\prod_{k=1}^n \Biggl[\Gamma\Biggl(1+t\sum_{i=1}^\ell\lambda_{ijk}\Biggr)\Biggr]^{(\sum_{i=1}^\ell\lambda_{ijk})^\theta}\\
\times\prod_{k=1}^n\prod_{i=1}^\ell \Biggl[\Gamma\Biggl(1+t\sum_{j=1}^m\lambda_{ijk}\Biggr)\Biggr] ^{(\sum_{j=1}^m\lambda_{ijk})^\theta}\end{gathered}\right)}
{\displaystyle\prod_{i=1}^\ell\prod_{j=1}^m\prod_{k=1}^{n}\bigl[\Gamma\bigl(1+\lambda_{ijk}t\bigr)\bigr]^{\rho\lambda_{ijk}^\theta}},\\
R_2(t)=\frac{\left(\begin{gathered}\prod_{i=1}^\ell \Biggl[\Gamma\Biggl(1+t\sum_{j=1}^m\sum_{k=1}^n\lambda_{ijk}\Biggr)\Biggr] ^{(\sum_{j=1}^m\sum_{k=1}^n\lambda_{ijk})^\theta}\\
\times\prod_{j=1}^m\Biggl[\Gamma\Biggl(1+t\sum_{k=1}^n\sum_{i=1}^\ell\lambda_{ijk}\Biggr)\Biggr]^{(\sum_{k=1}^n\sum_{i=1}^\ell\lambda_{ijk})^\theta}\\
\times\prod_{k=1}^n \Biggl[\Gamma\Biggl(1+t\sum_{i=1}^\ell\sum_{j=1}^m\lambda_{ijk}\Biggr)\Biggr] ^{(\sum_{i=1}^\ell\sum_{j=1}^m\lambda_{ijk})^\theta}\end{gathered}\right)}
{\displaystyle\prod_{i=1}^\ell\prod_{j=1}^m\prod_{k=1}^{n}\bigl[\Gamma\bigl(1+\lambda_{ijk}t\bigr)\bigr]^{\rho\lambda_{ijk}^\theta}},
\end{gather*}
where $t>0$, $\theta\ge0$, and $\lambda_{ijk}>0$ for $1\le i\le\ell$, $1\le j\le m$, and $1\le k\le n$.

For the sake of saving the space and shortening the length of this paper, we would not like to write down our guesses on possible conclusions and their detailed proofs of the functions $L_1(t)$, $L_2(t)$, $R_1(t)$, and $R_2(t)$ on $(0,\infty)$.

\section{Remarks}

\begin{remark}
The function $\frac{1}{e^x-1}$ has been investigated from viewpoints of analytic combinatorics and analytic number theory in~\cite{Eight-Identy-More.tex, exp-derivative-sum-Combined.tex, Exp-Diff-Ratio-Wei-Guo.tex, CAM-D-13-01430-Xu-Cen} and closely related references therein.
There have been so many papers dedicated to research of functions involving the exponential function, please refer to the papers~\cite{Best-Constant-exponential.tex, best-constant-one-simple.tex, absolute-mon-simp.tex, note-on-li-chen-conj-I.tex, Guo-Qi-MJMS-15.tex, mon-element-exp-final.tex, power-exp-new-proofs-miq, property-psi-ii-Munich.tex, best-constant-one.tex, Miao-Liu-Qi-jipam-08, exp-beograd, deg4-exp.tex, new-inequality-qi.tex, Bessel-ineq-Dgree-CM.tex, Bell-Polyn-P2v.tex, comp-mon-element-exp.tex, QiBerg.tex, qi-guo-taiwanese, power-exp-qi-debnath, Mansour-Qi-gam-tan.tex, Bell-Poly-Gen-P.tex, AJOM-D-16-00138.tex, Int-Mean-Ineq-Exp-Log.tex, 195-2017-JOCAAA.tex, simp-exp-degree-revised.tex, simp-exp-degree-new.tex, best-constant-one-simple-real.tex, exp-reciprocal-cm-IJOPCM.tex} and closely related references therein.
\end{remark}

\begin{remark}
There are so many papers dedicated to study of ratios of many gamma functions, please refer to the papers~\cite{Open-TJM-2003-Ineq-Ext-JAT.tex, Guo-Qi-TJM-03.tex, bounds-two-gammas.tex, Gautschi-Kershaw-TJANT.tex, Qi-Agar-Surv-JIA.tex, ratio-sqrt-gamma-indonesia.tex, JAAC384.tex, Wendel2Elezovic.tex-JIA, Wendel-Gautschi-type-ineq-Banach.tex, Mortici-aar.tex} and closely related references therein.
\end{remark}

\end{document}